\newtheorem{theorem}{Theorem}[section]
\newtheorem{lemma}[theorem]{Lemma}
\newtheorem{example}[theorem]{Example}
\theoremstyle{remark}
\newtheorem{remark}[theorem]{Remark}
\def\o{\omega}
\numberwithin{equation}{section}
\title{\textbf{M\"obius geometry of three dimensional Wintgen ideal submanifolds in $\mathbb{S}^{5}$}
\author { Zhenxiao Xie, Tongzhu Li, Xiang Ma, Changping Wang}}
\begin{document}
\maketitle
\begin{abstract}
Wintgen ideal submanifolds in space forms are those ones attaining equality at every point in the so-called DDVV inequality which relates the scalar curvature, the mean curvature and the normal scalar curvature.
This property is conformal invariant; hence we study them in the framework of M\"obius geometry, and restrict to three dimensional Wintgen ideal submanifolds in $\mathbb{S}^5$. In particular we give M\"obius characterizations for minimal ones among them, which are also known as (3-dimensional) austere submanifolds (in 5-dimensional space forms).
\end{abstract}
\medskip\noindent
{\bf 2000 Mathematics Subject Classification:} 53A30, 53A55, 53C42.
\par\noindent {\bf Key words:}  Wintgen ideal submanifolds, DDVV inequality, M\"obius geometry, austere submanifolds, complex curves\\

\vskip 1 cm
\section{Introduction}

The so-called DDVV inequality says that, given a $m$-dimensional submanifold $x:M^m\longrightarrow \mathbb{Q}^{m+p}(c)$ immersed in a real space form of dimension $m+p$ with constant sectional curvature $c$, at any point of $M$ we have
\begin{equation}\label{1.1}
s\leq c+||H||^2-s_N.
\end{equation}
Here $s=\frac{2}{m(m-1)}\sum_{1\leq i<j\leq n}\langle  R(e_i,e_j)e_j,e_i\rangle$ is the normalized scalar curvature with respect to the induced metric on $M$,
$H$ is the mean curvature,
and $s_N=\frac{2}{m(m-1)}||\mathbb{R}^{\perp}||$ is the normal scalar curvature. This remarkable inequality was first a conjecture due to De Smet, Dillen, Verstraelen and Vrancken \cite{Smet} in 1999, and proved by J. Ge, Z. Tang \cite{Ge} and Z. Lu \cite{Lu1} in 2008 independently.

As pointed out in \cite{Dajczer2}\cite{Smet}\cite{Lu1}\cite{Lu3},
it is a natural and important problem to characterize the extremal case,
i.e., those submanifolds attaining the equality \eqref{1.1} at every point, called \emph{Wintgen ideal submanifolds}.
In \cite{Ge} it was shown that the equality holds at
$x\in M^m$ if and only if there exist
an orthonormal basis $\{e_1,\cdots,e_m\}$ of $T_xM^m$ and
an orthonormal basis $\{n_1,\cdots,n_p\}$ of $T_x^{\bot}M^m$
such that the shape operators $\{A_{n_i},i=1,\cdots,m\}$ have the form
\begin{equation}\label{form1}
A_{n_1}=
\begin{pmatrix}
\lambda_1 & \mu_0 & 0 & \cdots & 0\\
\mu_0 & \lambda_1 & 0 & \cdots & 0\\
0  & 0 & \lambda_1 & \cdots & 0\\
\vdots & \vdots & \vdots & \ddots & \vdots\\
0  & 0 & 0 & \cdots & \lambda_1
\end{pmatrix},~~
A_{n_2}=
\begin{pmatrix}
\lambda_2\!+\!\mu_0 & 0 & 0 & \cdots & 0\\
0 & \lambda_2\!-\!\mu_0 & 0 & \cdots & 0\\
0  & 0 & \lambda_2 & \cdots & 0\\
\vdots & \vdots & \vdots & \ddots & \vdots\\
0  & 0 & 0 & \cdots & \lambda_2
\end{pmatrix},
\end{equation}
and
$$A_{n_3}=\lambda_3I_p,~~~~ A_{n_r}=0, r\ge 4.$$
This is the first step towards a complete classification.

Wintgen \cite{wint} first proved the inequality \eqref{1.1} for surfaces $M^2$ in $\mathbb{R}^4$, and that the equality holds if and only if the curvature ellipse of $M^2$ in $\mathbb{R}^4$ is a circle. Such surfaces are called \emph{super-conformal} surfaces. They come from projection of complex curves in the twistor space $\mathbb{C}P^3$ of $\mathbb{S}^4$ \cite{fb}. Together with totally umbilic submanifolds (spheres and planes), they provide the first examples of Wintgen ideal submanifolds. Note that they are not necessarily minimal surfaces in space forms. In particular, being super-conformal is a conformal invariant property, whereas being minimal is not.

The conformal invariance of Wintgen ideal property in the general case was pointed out in \cite{Dajczer1}. Thus it is appropriate to investigate and classify Wintgen ideal submanifolds under
the framework of M\"{o}bius geometry. For this purpose, the submanifold theory in M\"obius geometry established
by the fourth author will be briefly reviewed in Section~2.

We will always assume that the Wintgen ideal submanifolds in consideration are not totally umbilic.
Note that to have the shape operators taking the form in \eqref{form1}, the distribution
$\mathbb{D}=\mathrm{Span}\{e_1,e_2\}$ is well-defined. We call it
\emph{the canonical distribution}. The first M\"obius classification result was obtained by us in \cite{Li1}.\\

\noindent
{\bf Theorem A(Li-Ma-Wang\cite{Li1}):\hskip 3pt}
\emph{Let $x:M^m\to\mathbb{S}^{m+p}(m\geq3)$ be a Wintgen ideal submanifold and it is not totally umbilic. If the canonical distribution $\mathbb{D}=\mathrm{Span}\{e_1,e_2\}$ is integrable, then locally $x$ is M\"{o}bius equivalent to either one of the following three kinds of examples described in $\mathbb{R}^{m+p}$:\\
\indent (i) a cone over a minimal Wintgen ideal surface in ${\mathbb S}^{2+p}$;\\
\indent (ii) a cylinder over a minimal Wintgen ideal surface in ${\mathbb R}^{2+p}$;\\
\indent (iii) a rotational submanifold over a minimal Wintgen ideal surface in ${\mathbb H}^{2+p}$.}\\

In this paper we consider three dimensional Wintgen ideal submanifolds $x:M^3\to\mathbb{S}^5$ whose canonical distribution
$\mathbb{D}$ is not integrable.
There is a M\"obius invariant 1-form $\omega$ associated with $x$. For its definition as well as other basic equations and invariants, see Section~3.

Our main result is stated as below, which is proved in Section~4.\\

\noindent
{\bf Theorem B:\hskip 3pt}
\emph{Suppose $x:M^3\to\mathbb{S}^5$ is a Wintgen ideal submanifold whose canonical distribution $\mathbb{D}$ is not integrable. It is M\"obius equivalent to a minimal Wintgen ideal submanifold in a five dimensional space form $\mathbb{Q}^5(c)$
if and only if the 1-form $\omega$ is closed.}\\

Under some further conditions, in Section~5 we characterize minimal Wintgen ideal submanifolds coming from Hopf bundle over complex curves in $\mathbb{C}P^2$. We also discuss the classification of M\"obius homogeneous ones among Wintgen ideal 3-dimensional submanifolds in $\mathbb{S}^5$, which include the following example:
\[
x: \mathrm{SO}(3)~ \longrightarrow~\mathbb{S}^5,~~~
 (u, v, u\times v)  \mapsto \frac{1}{\sqrt{2}}(u, v).
\]

As to the geometric meaning of the 1-form $\omega$, we just mention that it could still be defined for Wintgen ideal submanifolds with dimension $m\ge 4$. In a forthcoming paper \cite{Li2} we will show that $d\omega=0$ is equivalent to the property that $\mathbb{D}=\mathrm{Span}\{e_1,e_2\}$ generates a 3-dimensional integrable distribution on $M^3$. Assume this is the case; then we will obtain a similar classification \cite{Li2} as in Theorem~A. These results again demonstrate the phenomenon described by our reduction theorem
\cite{Li0}.

To understand the classification result, it is necessary to note that among Wintgen ideal submanifolds,
there are a lot of minimal examples in space forms.
Although they do not exhaust all possible examples,
our classification demonstrates their importance as being
representatives in a M\"obius equivalence class of submanifolds,
or as building blocks of generic examples. Those minimal
Wintgen ideal surfaces are called \emph{super-minimal} in the previous literature, including examples like complex curves in $\mathbb{C}^n$ and minimal 2-spheres in $\mathbb{S}^n$.
For three dimensional submanifolds in 5-dimensional space forms $\mathbb{S}^5,\mathbb{R}^5,\mathbb{H}^5$, being minimal and Wintgen ideal is equivalent to being \emph{austere submanifolds}, i.e. the eigenvalues of the second fundamental form with respect to any normal direction occur in oppositely signed pairs.
Such submanifolds have been classified locally by Bryant \cite{br} for $M^3\to\mathbb{R}^5$ , by Dajczer and Florit \cite{Dajczer3} for $M^3\to\mathbb{S}^5$, and by
Choi and Lu \cite{Lu} for $M^3\to\mathbb{H}^5$.

Finally we note that in \cite{Dajczer1}, Dajczer and Florit have provided a parametric construction of Wintgen ideal submanifolds of codimension two and arbitrary dimension in terms of minimal surfaces in $\mathbb{R}^{m+2}$. Compared to our work, they
had no restriction on the dimension of $M$, and
the construction is explicit and valid for generic examples.
On the other hand, their descriptions were not in a M\"obius invariant language.
In another paper \cite{Li3}, we will give a construction of all Wintgen ideal submanifolds of codimension two and arbitrary dimension $m$ in terms of holomorphic, isotropic curves in a complex quadric $Q^{m+2}$. \\

\textbf{Acknowledgement} This work is funded by the Project 10901006 and 11171004 of National Natural Science Foundation of China. We thank Professor Zizhou Tang for pointing out the homogeneous embedding of $\mathrm{SO}(3)$ in $\mathbb{S}^5$ to us. We are grateful to the referees for their helpful suggestions.

 \vskip 1 cm
\section{Submanifold theory in M\"obius geometry}

In this section we briefly review the theory of submanifolds
in M\"obius geometry. For details we refer to \cite{CPWang}, \cite{liu}.

Recall that in the classical light-cone model, the light-like (space-like) directions in the Lorentz space $\mathbb{R}^{m+p+2}_1$ correspond to points (hyperspheres) in the round sphere $\mathbb{S}^{m+p}$, and the Lorentz orthogonal group correspond to conformal transformation group of $\mathbb{S}^{m+p}$. The Lorentz metric is written out explicitly as
\[
\langle  Y,Z\rangle=-Y_0Z_0+Y_1Z_1+\cdots+Y_{m+p+1}Z_{m+p+1},
\]
for
$Y=(Y_0,Y_1,\cdots,Y_{m+p+1}), Z=(Z_0,Z_1,\cdots,Z_{m+p+1})\in
\mathbb{R}^{m+p+2}_1$.

Let $x:M^m\rightarrow \mathbb{S}^{m+p}\subset \mathbb{R}^{m+p+1}$ be a submanifold without umbilics. Take $\{e_i|1\le i\le m\}$ as the tangent frame with respect to the induced metric $I=dx\cdot dx$, and $\{\theta_i\}$ as the dual 1-forms.
Let $\{n_{r}|1\le r\le p\}$ be an orthonormal frame for the
normal bundle. The second fundamental form and
the mean curvature of $x$ are
\begin{equation}\label{2.1}
II=\sum_{ij,r}h^{r}_{ij}\theta_i\otimes\theta_j
n_{r},~~H=\frac{1}{m}\sum_{j,r}h^{r}_{jj}n_{r}=\sum_{r}H^{r}n_{r},
\end{equation}
respectively. We define the M\"{o}bius position vector $Y:
M^m\rightarrow \mathbb{R}^{m+p+2}_1$ of $x$ by
\begin{equation}\label{2.2}
Y=\rho(1,x),~~~
~~\rho^2=\frac{m}{m-1}\left|II-\frac{1}{m} tr(II)I\right|^2~.
\end{equation}
$Y$ is called \emph{the canonical lift} of $x$ \cite{CPWang}.
Two submanifolds $x,\bar{x}: M^m\rightarrow \mathbb{S}^{m+p}$
are M\"{o}bius equivalent if there exists $T$ in the Lorentz group
$O(m+p+1,1)$ such that $\bar{Y}=YT.$
It follows immediately that
\begin{equation}
\mathrm{g}=\langle dY,dY\rangle=\rho^2 dx\cdot dx
\end{equation}
is a M\"{o}bius invariant, called the M\"{o}bius metric of $x$.

Let $\Delta$ be the Laplacian with respect to $\mathrm{g}$. Define
\begin{equation}
N=-\frac{1}{m}\Delta Y-\frac{1}{2m^2}
\langle \Delta Y,\Delta Y\rangle Y,
\end{equation}
which satisfies
\[
\langle Y,Y\rangle=0=\langle N,N\rangle, ~~
\langle N,Y\rangle=1~.
\]
Let $\{E_1,\cdots,E_m\}$ be a local orthonormal frame for $(M^m,\mathrm{g})$
with dual 1-forms $\{\omega_1,\cdots,\omega_m\}$. Write
$Y_j=E_j(Y)$. Then we have
\[
\langle  Y_j,Y\rangle =\langle  Y_j,N\rangle =0, ~\langle  Y_j,Y_k\rangle =\delta_{jk}, ~~1\leq j,k\leq m.
\]
We define
\[
\xi_r=(H^r,n_r+H^r x),~~~1\le r\le p.
\]
Then $\{\xi_{1},\cdots,\xi_p\}$ form the orthonormal frame of the
orthogonal complement of $\mathrm{Span}\{Y,N,Y_j|1\le j\le m\}$.
And $\{Y,N,Y_j,\xi_{r}\}$ is a moving frame in $\mathbb{R}^{m+p+2}_1$ along $M^m$.
\begin{remark}\label{rem-xi}
Geometrically, at one point $x$, $\xi_r$ corresponds to the unique sphere tangent to $M^m$ with normal vector $n_r$ and the same mean curvature $H^r=\langle \xi_r,g\rangle$ where $g=(1,\vec{0})$ is a constant time-like vector. We call $\{\xi_r\}_{r=1}^p$ \emph{the mean curvature spheres} of $M^m$.
\end{remark}
We fix the range of indices in this section as below: $1\leq
i,j,k\leq m; 1\leq r,s\leq p$. The structure equations are:
\begin{equation}\label{equation}
\begin{split}
&dY=\sum_i \omega_i Y_i,\\
&dN=\sum_{ij}A_{ij}\omega_i Y_j+\sum_{i,r} C^r_i\omega_i \xi_{r},\\
&d Y_i=-\sum_j A_{ij}\omega_j Y-\omega_i N+\sum_j\omega_{ij}Y_j
+\sum_{j} B^{r}_{ij}\omega_j \xi_{r},\\
&d \xi_{r}=-\sum_i C^{r}_i\omega_i Y-\sum_{i,j}\omega_i
B^{r}_{ij}Y_j +\sum_{s} \theta_{rs}\xi_{s},
\end{split}
\end{equation}
where $\omega_{ij}$ are the connection $1$-forms of the M\"{o}bius
metric $\mathrm{g}$, and $\theta_{rs}$ are the normal connection $1$-forms. The
tensors
\begin{equation}
{\bf A}=\sum_{i,j}A_{ij}\omega_i\otimes\omega_j,~~ {\bf
B}=\sum_{i,j,r}B^{r}_{ij}\omega_i\otimes\omega_j \xi_{r},~~
\Phi=\sum_{j,r}C^{r}_j\omega_j \xi_{r}
\end{equation}
are called the Blaschke tensor, the M\"{o}bius second fundamental
form and the M\"{o}bius form of $x$, respectively.
The covariant derivatives $A_{ij,k}, B^{r}_{ij,k}, C^{r}_{i,j}$ are defined as usual. For example,
\begin{eqnarray*}
&&\sum_j C^{r}_{i,j}\omega_j=d C^{r}_i+\sum_j C^{r}_j\omega_{ji}
+\sum_{s} C^{s}_i\theta_{sr},\\
&&\sum_k B^{r}_{ij,k}\omega_k=d B^{r}_{ij}+\sum_k
B^{r}_{ik}\omega_{kj} +\sum_k B^{r}_{kj}\omega_{ki}+\sum_{s}
B^{s}_{ij}\theta_{sr}.
\end{eqnarray*}
The integrability conditions are given as below:
\begin{eqnarray}
&&A_{ij,k}-A_{ik,j}=\sum_{r}(B^{r}_{ik}C^{r}_j
-B^{r}_{ij}C^{r}_k),\label{equa1}\\
&&C^{r}_{i,j}-C^{r}_{j,i}=\sum_k(B^{r}_{ik}A_{kj}
-B^{r}_{jk}A_{ki}),\label{equa2}\\
&&B^{r}_{ij,k}-B^{r}_{ik,j}=\delta_{ij}C^{r}_k
-\delta_{ik}C^{r}_j,\label{equa3}\\
&&R_{ijkl}=\sum_{r}(B^{r}_{ik}B^{r}_{jl}-B^{r}_{il}B^{r}_{jk}
+\delta_{ik}A_{jl}+\delta_{jl}A_{ik}
-\delta_{il}A_{jk}-\delta_{jk}A_{il}),\label{equa4}\\
&&R^{\perp}_{rs ij}=\sum_k
(B^{r}_{ik}B^{s}_{kj}-B^{s}_{ik}B^{r}_{kj}). \label{equa5}
\end{eqnarray}
Here $R_{ijkl}$ denote the curvature tensor of $\mathrm{g}$.
Other restrictions on tensor $\bf B$ are
\begin{equation}
\sum_j B^{r}_{jj}=0, ~~~\sum_{i,j,r}(B^{r}_{ij})^2=\frac{m-1}{m}. \label{equa7}
\end{equation}
All coefficients in the structure equations are determined by $\{\mathrm{g}, {\bf B}\}$
and the normal connection $\{\theta_{rs}\}$.
Coefficients of M\"{o}bius invariants and
the isometric invariants are related as below. (We omit the formula for $A_{ij}$ since it will not be used later.)
\begin{align}
B^{r}_{ij}&=\rho^{-1}(h^{r}_{ij}-H^{r}\delta_{ij}),\label{2.22}\\
C^{r}_i&=-\rho^{-2}[H^{r}_{,i}+\sum_j(h^{r}_{ij}
-H^{r}\delta_{ij})e_j(\ln\rho)]. \label{2.23}
\end{align}
\begin{remark}\label{rem-xi2}
For $x: M^3 \rightarrow \mathbb{R}^5$, the M\"obius position vector $Y: M^3\rightarrow \mathbb{R}^7_1$ and the mean curvature sphere $\{\xi_{1},\cdots,\xi_p\}$ are given by
\[
Y=\rho(\frac{1+|x|^2}{2}, \frac{1-|x|^2}{2}, x),
\]
\[
\xi_r=\left(\frac{1+|x|^2}{2}, \frac{1-|x|^2}{2}, x\right)H^r+(x\cdot n_r,-x\cdot n_r,n_r).
\]
Note that $H^r=\langle \xi_r,g\rangle$ where $g=(-1,1,\vec{0})$ is a constant light-like vector.
For $x: M^3 \rightarrow \mathbb{H}^5 \subset \mathbb{R}^6_1$ (the hyperboloid model of the hyperbolic space), the corresponding formulae are
\[
Y=\rho(x,1),~~~\xi_r=(n_r+H^rx,H^r),~~r=1,\cdots,p.
\]
In this case $H^r=\langle \xi_r,g\rangle$ where $g=(\vec{0},1)$ is a constant space-like vector.
The M\"{o}bius invariants are related to
the isometric invariants still by \eqref{2.22}$\sim$ \eqref{2.23}.
\end{remark}

\section{Three dimensional Wintgen ideal submanifolds in $\mathbb{S}^5$}

From now on, we assume $x: M^{3}\to \mathbb{S}^{5}$ to be a three dimensional Wintgen ideal submanifold without umbilic points. According to \eqref{form1} and \eqref{2.22},\eqref{equa7}, that means we can choose a suitable tangent and normal frame ($\{E_1,E_2,E_3\}$ and $\{\xi_1,\xi_2\}$) such that the M\"obius second fundamental form $\bf{B}$ takes the form
\begin{equation}\label{3.1}
B^{1}=
\begin{pmatrix}
0 & \mu & 0\\
\mu & 0 & 0\\
0  & 0 & 0
\end{pmatrix},~~~~~~
B^{2}=
\begin{pmatrix}
\mu & 0 & 0\\
0 & -\mu & 0\\
0  & 0 & 0
\end{pmatrix}, ~~~~\mu=\frac{1}{\sqrt{6}}.
\end{equation}
\begin{remark}\label{rem-transform}
The distribution $\mathbb{D}=\mathrm{Span}\{E_1,E_2\}$ is well-defined. The same is true for the vector field $E_3$ up to a sign, and this sign is fixed on a connected and orientable subset of $M^3$.
Notice that the tangent and normal frames still allow a simultaneous transformation
\begin{equation}\label{transform}
(\widetilde{E}_1,\widetilde{E}_2)=(E_1,E_2)
\begin{pmatrix} ~~\cos t &\sin t \\ -\sin t & \cos t\end{pmatrix},
~~(\widetilde{\xi}_1,\widetilde{\xi}_2)=(\xi_1,\xi_2)
\begin{pmatrix} \cos 2t &-\sin 2t \\ \sin 2t & ~~\cos 2t\end{pmatrix}
\end{equation}
if we fix the induced orientation on the tangent and normal bundles and require that $B^1,B^2$ still take the form \eqref{3.1}.
\end{remark}

First we compute the covariant derivatives of $B^{r}_{ij}$. From (\ref{3.1}) we get
\begin{equation}\label{bb1}
B^1_{33,i}=B^2_{33,i}=B^1_{12,i}=B^2_{11,i}=B^2_{22,i}=0,~~1\leq i\leq 3.
\end{equation}
Other derivatives are related with the connection 1-forms $\o_{ij}$ as below:
\begin{equation}\label{bb2}
\begin{aligned}
\omega_{23}&=\sum_i\frac{B^1_{13,i}}{\mu}\omega_i
=-\sum_i\frac{B^2_{23,i}}{\mu}\omega_i;\\ \omega_{13}&=\sum_i\frac{B^1_{23,i}}{\mu}\omega_i
=~~\sum_i\frac{B^2_{13,i}}{\mu}\omega_i;\\
2\omega_{12}+\theta_{12}&=\sum_i\frac{-B^1_{11,i}}{\mu}\omega_i
=\sum_i\frac{B^1_{22,i}}{\mu}\omega_i
=\sum_i\frac{B^2_{12,i}}{\mu}\omega_i.
\end{aligned}
\end{equation}
By \eqref{equa3} and \eqref{bb1} we know the symmetry property below,
\begin{equation}\label{bb3}
B^1_{13,2}=B^1_{23,1}=B^1_{12,3}=0,
~B^2_{13,2}=B^2_{23,1}=B^2_{12,3},
\end{equation}
where we have used $B^1_{12,3}=0$ by \eqref{bb1}.
From this fact and comparing coefficients in \eqref{bb2}, we obtain
\begin{equation}\label{bb4}
\mu\o_{13}(E_1)=B^2_{13,1}=B^1_{23,1}=0,~
\mu\o_{23}(E_2)=-B^2_{23,2}=B^1_{13,2}=0.
\end{equation}
Similarly we know the coefficients in the following three
equalities are equal to each other:
\begin{equation}\label{bb5}
\begin{aligned}
-\mu\o_{23}(E_1)&=-B^1_{13,1}=B^2_{23,1},\\
\mu\o_{13}(E_2)&=~~B^1_{23,2}=B^2_{13,2},\\
\mu(2\o_{12}+\theta_{12})(E_3)&=-B^1_{11,3}=B^1_{22,3}=B^2_{12,3}.
\end{aligned}
\end{equation}

Next we derive the M\"obius form, using
\eqref{equa3} and the information on $B^r_{ij,k}$:
\begin{equation}\label{cc1}
C^1_3=B^1_{22,3}-B^1_{23,2}=0,~C^2_3=B^2_{11,3}-B^2_{13,1}=0.
\end{equation}
The other coefficients $\{C^{r}_j\}$ are obtained similarly as
below:
\begin{equation}\label{cc2}
\begin{aligned}
&C^{1}_1=-B^{1}_{13,3}=-\mu\omega_{23}(E_3),
~~~~C^{2}_2=-B^{2}_{23,3}=~~\mu\omega_{23}(E_3), \\
&C^{1}_2=-B^{1}_{23,3}=-\mu\omega_{13}(E_3), ~~~~C^{2}_1=-B^{2}_{13,3}=-\mu\omega_{13}(E_3),\\
&C^{1}_1=B^{1}_{22,1}=~\mu(2\o_{12}+\theta_{12})(E_1)
=~B^{2}_{12,1}=-C^{2}_2, \\
&C^{1}_2=B^{1}_{11,2}=-\mu(2\o_{12}+\theta_{12})(E_2) =-B^{2}_{12,2}=~C^{2}_1.
\end{aligned}
\end{equation}
For simplicity we introduce the following notations:
\begin{equation}\label{UVL}
\begin{aligned}
&U=~~\o_{23}(E_3)=-\frac{C^1_1}{\mu}=\frac{C^2_2}{\mu},\\
&V=-\o_{13}(E_3)=~~\frac{C^1_2}{\mu}=\frac{C^2_1}{\mu},\\
&L=\o_{13}(E_2)=-\o_{23}(E_1)=-\frac{B^1_{11,3}}{\mu}.
\end{aligned}
\end{equation}
Then we summarize what we know about the connection 1-forms and
the covariant derivatives $B^r_{ij,k}$ as below:
\begin{gather}
\o_{13}=L\o_2-V\o_3,~~~\o_{23}=-L\o_1+U\o_3;\notag\\
2\o_{12}+\theta_{12}=-U \o_1-V\o_2+L\o_3.\label{omega}
\end{gather}
By \eqref{omega} we have
\begin{equation}\label{L}
d\o_3=\o_{31}\wedge \o_1+\o_{32}\wedge\o_2\equiv 2L\o_1\wedge \o_2 ~~~mod(\o_3).
\end{equation}
So the distribution $\mathbb{D}=\mathrm{Span}\{E_1, E_2\}$ is integrable if and only if $L=0$ identically.

For the information on the Blaschke tensor $\bf A$, we use \eqref{equa2}.
It requires to compute the covariant derivatives of $C^r_j$, which is quite straightforward:
\[
C^1_{1,i}=-C^2_{2,i}, ~C^1_{2,i}=C^2_{1,i},
~C^1_{3,2}=C^1_1 L=-\mu UL, ~C^1_{3,1}=-C^1_2 L=-\mu VL.
\]
Now it follows that
\begin{align}
\mu (A_{11}-A_{22})&=\sum_k(B^1_{2k}A_{k1}-B^1_{1k}A_{k2})
=C^1_{2,1}-C^1_{1,2},\label{A11}\\
2\mu A_{12}&=\sum_k(B^2_{1k}A_{k2}-B^2_{2k}A_{k1})
=C^2_{1,2}-C^2_{2,1}=C^1_{1,1}+C^1_{2,2},\label{A12}\\
\mu A_{13}&=\sum_k(B^1_{2k}A_{k3}-B^1_{3k}A_{k2})
=C^1_{2,3}-C^1_{3,2}=C^1_{2,3}+\mu UL,\label{A13}\\
\mu A_{23}&=\sum_k(B^1_{1k}A_{k3}-B^1_{3k}A_{k1})
=C^1_{1,3}-C^1_{3,1}=C^1_{1,3}+\mu VL.\label{A23}
\end{align}

Consider a new frame $\{Y,{\hat Y},\eta_1,\eta_2,\eta_3,
\xi_1,\xi_2\}$ in ${\mathbb R}^{7}_1$ along $M^3$ as below, whose geometric meaning will be clear later (see Theorem~\ref{thm-envelop} and Remark~\ref{rem-hatY}).
\begin{gather}
\eta_1=Y_1+\frac{C_2^1}{\mu}Y=Y_1+VY,~~
\eta_2=Y_2+\frac{C^1_1}{\mu}Y=Y_2-UY,~~
\eta_3=Y_3-\lambda Y;\label{eta}\\
{\hat Y}=N-\frac{1}{2}(U^2+V^2+\lambda^2)Y-VY_1+U Y_2+\lambda Y_3.\label{hatY}
\end{gather}
Here $\lambda\in C^{\infty}(M^3)$ is an arbitrarily given smooth function at the beginning. The new frame is orthonormal except that
\[
\langle Y,Y\rangle=0=\langle {\hat Y},{\hat Y}\rangle,~
\langle Y,{\hat Y}\rangle=1.
\]
By the original structure equations \eqref{equation} we get
\begin{align}
d\xi_{1}&=-\mu\o_2\eta_1-\mu\o_1\eta_2+\theta_{12}\xi_2,\label{3.7}\\
d\xi_{2}&=-\mu \o_1\eta_1+\mu\o_2\eta_2-\theta_{12}\xi_1,\label{3.8}\\
d\eta_1&=-{\hat\o}_1Y-\o_1{\hat
Y}+\sum_k\Omega_{1k}\eta_k+\mu\o_2\xi_1+\mu\o_1\xi_2,\label{3.3}\\
d\eta_2&=-{\hat\o}_2Y-\o_2{\hat
Y}+\sum_k\Omega_{2k}\eta_k+\mu\o_1\xi_1-\mu\o_2\xi_2,\label{3.4}\\
d\eta_3&=-{\hat\o}_3Y-\o_3{\hat Y}+\sum_k\Omega_{3k}\eta_k,\label{3.5}\\
dY&=\o Y+\o_1\eta_1+\o_2\eta_2+\o_3\eta_3,\label{3.9}\\
d{\hat Y}&=-\o {\hat Y}+{\hat\o}_1\eta_1+{\hat\o}_2\eta_2+{\hat\o}_3\eta_3, \label{3.10}
\end{align}
Note that \eqref{3.7} and \eqref{3.8} give the first motivation for the definition of $\eta_1,\eta_2$ in \eqref{eta}.

Differentiate \eqref{3.7} $\sim$ \eqref{3.10}. We get the following integrability equations:
\begin{align}
&d\o_1=\o\wedge\o_1+\Omega_{12}\wedge\o_2+\Omega_{13}\wedge\o_3; \label{3.12}\\ &d\o_2=\o\wedge\o_2-\Omega_{12}\wedge\o_1+\Omega_{23}\wedge\o_3; \label{3.13}\\
&d\o_3=\o\wedge\o_3+\Omega_{31}\wedge\o_1+\Omega_{32}\wedge\o_2; \label{3.21}\\
&d\o_1=-(\theta_{12}+\Omega_{12})\wedge\o_2; \label{3.14}\\
&d\o_2=~~(\theta_{12}+\Omega_{12})\wedge\o_1; \label{3.15}\\
&d{\hat\o}_1=-\o\wedge{\hat\o}_1+\Omega_{12}\wedge{\hat\o}_2
+\Omega_{13}\wedge{\hat\o}_3; \label{3.16}\\
&d{\hat\o}_2=-\o\wedge{\hat\o}_2-\Omega_{12}\wedge{\hat\o}_1
+\Omega_{23}\wedge{\hat\o}_3; \label{3.17}\\
&d{\hat\o}_3=-\o\wedge{\hat \o}_3+\Omega_{31}\wedge{\hat\o}_1
+\Omega_{32}\wedge{\hat\o}_2; \label{3.22}\\
&d\Omega_{12}=\Omega_{13}\wedge\Omega_{32}-\o_1\wedge{\hat\o}_2
-{\hat\o_1}\wedge\o_2+2\mu^2\o_1\wedge\o_2; \label{3.18}\\
&d\Omega_{13}=\Omega_{12}\wedge\Omega_{23}-\o_1\wedge{\hat\o}_3
-{\hat\o}_1\wedge\o_3; \label{3.19}\\
&d\Omega_{23}=-\Omega_{12}\wedge\Omega_{13}-\o_2\wedge{\hat\o}_3
-{\hat\o}_2\wedge\o_3; \label{3.20}\\
&\Omega_{13}\wedge\o_1=\Omega_{23}\wedge\o_2;\hskip 5pt \Omega_{13}\wedge\o_2=-\Omega_{23}\wedge\o_1;\label{3.23}\\
&d\theta_{12}=2\mu^2\o_1\wedge\o_2;\label{3.24}\\
&\o_1\wedge{\hat\o_2}=-\o_2\wedge{\hat\o}_1;\hskip 5pt \o_1\wedge{\hat\o_1}=\o_2\wedge{\hat\o}_2;\label{3.25}\\
&d\o=-\o_1\wedge{\hat\o}_1-\o_2\wedge{\hat\o}_2
-\o_3\wedge{\hat\o}_3.\label{3.26}
\end{align}
The 1-forms $\omega,{\hat\o}_i,\Omega_{ij}=-\Omega_{ji}$
are determined by \eqref{equation} and \eqref{eta},\eqref{hatY}:
\begin{align}
\Omega_{12}&=\langle d\eta_1,\eta_2\rangle
=\o_{12}+U\o_1+V\o_2,\label{Omega12}\\
\Omega_{13}&=\langle d\eta_1,\eta_3\rangle
=\lambda\o_1+L\o_2,\label{Omega13}\\
\Omega_{23}&=\langle d\eta_2,\eta_3\rangle
=-L\o_1+\lambda\o_2,\label{Omega23}\\
\o&=\langle dY,{\hat Y}
\rangle=-V\o_1+U\o_2+\lambda\o_3.
\end{align}
It follows from \eqref{3.25} that there exist some functions
$\hat{F},\hat{G}$ such that
\begin{equation}
{\hat\o_1}=\hat{F}\o_1+\hat{G}\o_2,~~ {\hat\o_2}=-\hat{G}\o_1+\hat{F}\o_2.\label{FG}
\end{equation}
A straightforward yet lengthy computation find
\begin{align*}
{\hat\o}_1&=\langle d{\hat Y},\eta_1\rangle \\
&= d\left(N-\frac{1}{2}(U^2+V^2+\lambda^2)Y-VY_1+U Y_2+\lambda Y_3\right)\cdot (Y_1+VY)\\
&=\sum_i A_{i1}\o_i-\frac{1}{2}(U^2+V^2+\lambda^2)\o_1\\
&~~~~-dV
+V^2\o_1+U\o_{21}-UV\o_2+\lambda\o_{31}-\lambda V\o_3\\
&=\left(A_{11}+\frac{1}{2}(U^2+V^2-\lambda^2)
-\frac{1}{\mu}C^1_{2,1}\right)\o_1
+\left(A_{12}-\frac{1}{\mu}C^1_{2,2}-\lambda L\right)\o_2,
\end{align*}
where we have used $d C^1_2=\sum_i C^1_{2,i}\o_i-C^1_1\o_{12}-C^2_2\theta_{21}$
and \eqref{omega}, \eqref{A13}.
For ${\hat\o}_2$ we compute in a similar manner, using
\eqref{A11},\eqref{A12} and \eqref{A23} to verify
\eqref{FG}, with the result as below:
\begin{align}
\hat{F}&=A_{11}+\frac{1}{2}(U^2+V^2-\lambda^2)
-\frac{1}{\mu}C^1_{2,1},\label{hatF}\\
\hat{G}&=A_{12}-\frac{1}{\mu}C^1_{2,2}-\lambda L
=\frac{C^1_{1,1}-C^1_{2,2}}{2\mu}-\lambda L.\label{hatG}
\end{align}
In particular we have the following observation.
\begin{lemma}\label{lem-G}
$\hat{G}$=0 if and only if $\lambda=\frac{G}{L}$ with $L=-\frac{B^1_{11,3}}{\mu}, G=\frac{C^1_{1,1}-C^1_{2,2}}{2\mu}$.
\end{lemma}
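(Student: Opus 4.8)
The plan is to deduce the lemma directly from the expression for $\hat G$ established immediately above its statement, namely \eqref{hatG}. Recall that $\lambda$ is the free smooth function entering the definitions \eqref{eta}--\eqref{hatY}, and that \eqref{hatG} already reads
\[
\hat G = A_{12} - \frac{1}{\mu}C^1_{2,2} - \lambda L = \frac{C^1_{1,1} - C^1_{2,2}}{2\mu} - \lambda L,
\]
where the second equality uses \eqref{A12} (which gives $2\mu A_{12} = C^1_{1,1} + C^1_{2,2}$, hence $A_{12} - \tfrac{1}{\mu}C^1_{2,2} = \tfrac{C^1_{1,1} - C^1_{2,2}}{2\mu}$), and where $L = -B^1_{11,3}/\mu$ by \eqref{UVL}. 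So the first step is simply to record the identity $\hat G = G - \lambda L$, with $G = (C^1_{1,1} - C^1_{2,2})/(2\mu)$ exactly as in the statement.

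Given this, the second step is the elementary observation that $\hat G = 0$ is equivalent to $\lambda L = G$; since $\mu = 1/\sqrt 6$ is a fixed nonzero constant, no further manipulation of the structure equations is required. To pass from $\lambda L = G$ to $\lambda = G/L$ one divides by $L$, and this is the one place that needs justification: here I would invoke the standing hypothesis that the canonical distribution $\mathbb{D} = \mathrm{Span}\{E_1, E_2\}$ is not integrable, so that by \eqref{L} we have $d\omega_3 \equiv 2L\,\omega_1 \wedge \omega_2$ modulo $\omega_3$, whence $L \not\equiv 0$; on the open dense subset where $L \neq 0$, the condition $\hat G = 0$ is then equivalent to $\lambda = G/L$. (At points where $L = 0$, vanishing of $\hat G$ forces $G = 0$ as well, so the displayed formula is still the natural normalization, and in any event the subsequent arguments take place where $L \neq 0$.)

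The main obstacle: essentially none. The lemma is a bookkeeping consequence of \eqref{hatG}, so the only subtlety is the invertibility of $L$ needed for the division, which is precisely where the non-integrability assumption for $\mathbb{D}$ is used.
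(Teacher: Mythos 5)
Your proposal is correct and follows exactly the route the paper intends: Lemma~\ref{lem-G} is stated as an immediate observation from \eqref{hatG} (via \eqref{A12} and \eqref{UVL}), so $\hat G = G-\lambda L$ and the equivalence follows by dividing by $L$, which is legitimate precisely under the non-integrability hypothesis $L\neq 0$ adopted for the rest of the paper. Your explicit remark on where the division by $L$ is justified is the only content the paper leaves implicit, and it is handled correctly.
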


In the end of this section we make the following important geometric observation.\\

The spacelike 2-plane $\mathrm{Span}_{\mathbb{R}}\{\xi_1,\xi_2\}$ at
$p\in M^3$ is well-defined, and we call it
\emph{the 3-dimensional mean curvature sphere},
because it defines a 3-sphere tangent to $M^3$ at $p$ with the same mean curvature vector. (Please compare to Remark~\ref{rem-xi}.)

The first key observation is that in the codimension two case, $\mathrm{Span}_{\mathbb{R}}\{\xi_1,\xi_2\}$ is determined by
the complex line $\mathrm{Span}_{\mathbb{C}}\{\xi_{1}-i\xi_{2}\}$ and vice versa. So $\xi_1-i\xi_2\in \mathbb{C}^7$ represents the same geometric object.
It is a null vector with respect to the $\mathbb{C}$-linear extension of the Lorentz metric to $\mathbb{R}^7_1\otimes\mathbb{C}$.
The complex line spanned by it corresponds to a point
\[
[\xi_{1}-i\xi_{2}] \in Q^5=\{[Z]\in \mathbb{C}P^6| \langle Z,Z\rangle =0\}.
\]
It is similar to the conformal Gauss map of a (Willmore) surface
\cite{br0} and to the Gauss map of a hypersurface in $\mathbb{S}^n$ \cite{MaOhnita}.

The second key observation is that under the hypothesis of being
Wintgen ideal, this 3-sphere congruence is indeed a 2-parameter
family, and its envelope not only recovers $M^3$, but also
extends it to a 3-manifold as a circle bundle over a Riemann surface $\overline{M}$ (a holomorphic curve).
The underlying surface $\overline{M}$ comes from
the quotient surface $\overline{M}=M^3/\Gamma$ (at least locally) where $\Gamma$ is the foliation of $M^3$ by the integral curves of the vector fields $E_3$. More precise statement is as below.

\begin{theorem}\label{thm-envelop}
For a Wintgen ideal submanifold $x:M^3\to \mathbb{S}^5$ we have:

(1) The complex vector-valued function $\xi_{1}-i\xi_{2}$
locally defines a complex curve
\[
[\xi_1-i\xi_2]:\overline{M}=M^3/\Gamma\to Q^5\subset\mathbb{C}P^6.
\]
\indent (2) \emph{The 3-dimensional mean curvature spheres} $\mathrm{Span}\{\xi_1(p),\xi_2(p)\}$ is a two-parameter family of 3-spheres in $\mathbb{S}^5$ when $p$ runs through $M^3$.

(3) The Lorentz 3-space $\mathrm{Span}\{Y,{\hat Y},\eta_3\}$ and the two light-like directions $[Y],[{\hat Y}]\in \mathbb{R}P^6$ correspond to a circle and two points on it.
These circles are a two-parameter family. They foliate a three dimensional submanifold $\widehat{M}^3$ enveloped by the 3-dimensional mean curvature spheres
$\mathrm{Span}\{\xi_{1},\xi_{2}\}$, which includes $M^3$ as part of it.
On $M^3$ these circular arcs are indeed the integral curves
of the vector field $E_3$.

(4) This envelope $\widehat{M}^3\subset\mathbb{S}^5$,
as a natural extension of $x:M^3\to\mathbb{S}^5$,
is still a Wintgen ideal submanifold (at its regular points).
\end{theorem}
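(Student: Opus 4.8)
The plan is to prove the four parts in the order stated, since each builds on the previous one, and to extract everything from the structure equations \eqref{3.7}--\eqref{3.26} together with the relations \eqref{Omega12}--\eqref{FG}. First, for part (1), I would show that the map $\xi_1-i\xi_2$ descends to the quotient $\overline M=M^3/\Gamma$ and is holomorphic there. From \eqref{3.7} and \eqref{3.8},
\[
d(\xi_1-i\xi_2)=-\mu(\o_2+i\o_1)\eta_1-\mu(\o_1-i\o_2)\eta_2-i\theta_{12}(\xi_1-i\xi_2)
=-\mu(\o_1-i\o_2)(\eta_2+i\eta_1)-i\theta_{12}(\xi_1-i\xi_2),
\]
so modulo the complex line $\mathbb{C}(\xi_1-i\xi_2)$ the differential of $[\xi_1-i\xi_2]$ annihilates $\o_3$ (hence descends to $\overline M$) and is a $(1,0)$-form with respect to the complex structure on $\overline M$ in which $\o_1+i\o_2$ is of type $(1,0)$; one checks $\langle\xi_1-i\xi_2,\xi_1-i\xi_2\rangle=0$ directly, so the image lies in $Q^5$. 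That $\o_1+i\o_2$ defines an integrable complex structure on the leaf space follows from \eqref{3.14}--\eqref{3.15}, which give $d(\o_1+i\o_2)\equiv -i(\theta_{12}+\Omega_{12})\wedge(\o_1+i\o_2)\ \mathrm{mod}(\o_3)$.

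For part (2), I would argue that the differential of the map $p\mapsto \mathrm{Span}\{\xi_1(p),\xi_2(p)\}$ has rank exactly two. The tangent to this Grassmannian-valued map is measured by the components of $d\xi_1,d\xi_2$ transverse to $\mathrm{Span}\{\xi_1,\xi_2\}$, namely by the $\eta_1,\eta_2$-parts in \eqref{3.7}--\eqref{3.8}; since $\mu\neq 0$ these involve only $\o_1,\o_2$ and span a two-dimensional space, so the rank is two (not three), which is precisely the statement that the family is a genuine two-parameter family. For part (3), the Lorentz 3-space $W=\mathrm{Span}\{Y,\hat Y,\eta_3\}$ is the orthogonal complement of $\mathrm{Span}\{\eta_1,\eta_2,\xi_1,\xi_2\}$; from \eqref{3.5}, \eqref{3.9}, \eqref{3.10} we read off that $dY,d\hat Y,d\eta_3$ all lie in $W\oplus\mathbb{R}\eta_1\oplus\mathbb{R}\eta_2$ with the $\eta_1,\eta_2$-components proportional to $\o_1,\o_2$ and the $\hat\o_i$'s, so along an integral curve of $E_3$ (where $\o_1=\o_2=0$) the 3-space $W$ is parallel; hence $W$ is constant along such a curve, and within this fixed Lorentz $3$-space $\mathbb{R}^3_1$ the two light-like lines $[Y],[\hat Y]$ together with the Lorentz structure single out a circle (the set of light-like lines in $\mathbb{R}^3_1$ is a circle). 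That these circles sweep out a 3-manifold $\widehat M^3$ enveloped by the mean-curvature spheres, with the $E_3$-integral curves as fibers, then follows by comparing \eqref{3.9}: $dY$ has no $\xi_r$-component, so $Y$ stays tangent to each 3-sphere $\mathrm{Span}\{\xi_1,\xi_2\}^{\perp\perp}$, i.e.\ $M^3$ is enveloped, and the same computation for $\hat Y$ extends it.

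Finally, for part (4), I would take $\hat Y$ (or any light-like section of $W$ away from $[Y]$) as the canonical lift of a new immersion $\hat x:\widehat M^3\to\mathbb{S}^5$ and verify it is again Wintgen ideal. By \eqref{3.10} the tangent directions of $\hat x$ are $\eta_1,\eta_2,\eta_3$, and its second fundamental sphere data is governed by the same pair $\xi_1,\xi_2$; from \eqref{3.3}--\eqref{3.5} the $\xi_r$-components of $d\eta_i$ are exactly the $B^r_{ij}$ of \eqref{3.1} (the $\eta_3$-row and column vanish), so the Möbius second fundamental form of $\hat x$ has the same normal form \eqref{3.1}, which by \eqref{form1}, \eqref{2.22}, \eqref{equa7} characterizes the Wintgen ideal condition. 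One must restrict to the regular set of $\hat x$, where $\hat Y$ is genuinely a canonical lift (i.e.\ $\hat x$ is an immersion without umbilics). The main obstacle I anticipate is part (3): carefully establishing that $W$ is genuinely parallel along the $E_3$-curves and that the resulting foliation by circles is well-defined and sweeps out a smooth 3-manifold, rather than merely an immersed object with singularities — this requires keeping track of the $\hat\o_i$ terms in \eqref{3.9}--\eqref{3.10} and invoking \eqref{3.25}, \eqref{FG} to see that along $\o_1=\o_2=0$ the frame $\{Y,\hat Y,\eta_3\}$ evolves only by a rotation within $W$ (in fact by $dY=\o Y$, $d\hat Y=-\o\hat Y$, $d\eta_3=0$ there), so $W$ is literally constant on each leaf and the circle is well-defined.
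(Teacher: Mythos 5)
Your treatment of parts (1)--(3) follows essentially the same route as the paper: the complexified equation for $d(\xi_1-i\xi_2)$, the observation that $\hat\o_1,\hat\o_2,\Omega_{13},\Omega_{23}$ contain no $\o_3$-component, and the constancy of $\mathrm{Span}\{Y,\hat Y,\eta_3\}$ along integral curves of $E_3$. (Two slips there: your displayed formula for $d(\xi_1-i\xi_2)$ has wrong signs --- the correct version is $d(\xi_{1}-i\xi_{2})=i\mu(\o_1+i\o_2)(\eta_1+i\eta_2)+i\theta_{12}(\xi_1-i\xi_2)$, i.e.\ \eqref{J} --- and your parenthetical claim that along a leaf $dY=\o Y$ and $d\eta_3=0$ is false, since $E_3(Y)=\o(E_3)Y+\eta_3$ and $E_3(\eta_3)=-\hat\o_3(E_3)Y-\hat Y$. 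What is true, and what your preceding sentence already establishes via \eqref{FG} and \eqref{Omega13},\eqref{Omega23}, is only that these derivatives remain inside $\mathrm{Span}\{Y,\hat Y,\eta_3\}$.)

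The genuine gap is in part (4). You take for granted that the ``second fundamental sphere data'' of the new immersion $[\hat Y]$ is still governed by $\{\xi_1,\xi_2\}$ and then read the coefficients of \eqref{3.1} off \eqref{3.3}--\eqref{3.5}. But the M\"obius normal frame of $[\hat Y]$ is by definition the orthogonal complement of $\mathrm{Span}\{\hat Y,\hat N,\hat Y_j\}$, where $\hat N$ involves the Laplacian $\hat\Delta\hat Y$ taken with respect to the \emph{induced metric of $\hat Y$}, namely $\hat\o_1^2+\hat\o_2^2+\hat\o_3^2$. Since $\hat Y$ is in general \emph{not} conformal to $Y$, there is no a priori reason that $\hat N\perp\xi_r$; if it were not, the mean curvature spheres of $\hat Y$ would be a different $2$-plane and the second fundamental form computed there need not take the form \eqref{3.1}. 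Establishing $\langle\hat\Delta\hat Y,\xi_r\rangle=0$ is precisely the ``Claim'' that occupies most of the paper's proof of (4): one builds an orthogonal equal-length frame $\hat E_1=\hat FE_1+\hat GE_2+a_{13}E_3$, etc., notes that all $E_3$-terms and first-order terms act on $\xi_r$ orthogonally to $\hat Y$, and reduces the computation to $\partial\bar\partial(\xi_1-i\xi_2)\in\mathrm{Span}_{\mathbb C}\{\xi_1-i\xi_2,\eta_1+i\eta_2\}\perp\hat Y$, which follows from \eqref{J}. Relatedly, even once the claim is known, the coefficients in \eqref{3.3}--\eqref{3.5} are expressed against $\o_j$ and $\eta_j$ rather than against an orthonormal coframe for $\hat Y$, so identifying them with the M\"obius second fundamental form of $\hat x$ still requires the rescaling by $\hat F^2+\hat G^2$; your proposal skips both steps, and they are the actual content of part (4).
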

\begin{proof}
The structure equations \eqref{3.7} and \eqref{3.8} imply
\begin{equation}\label{J}
d(\xi_{1}-i\xi_{2})=i\mu(\o_1+i\o_2)(\eta_1+i\eta_2)
+i\theta_{12}(\xi_1-i\xi_2).
\end{equation}
Geometrically that means $\xi=[\xi_{1}-i\xi_{2}]:M^3\to \mathbb{C}P^6$ decomposes as a quotient map $\pi:M^3\to \overline{M}=M^3/\Gamma$ composed with a holomorphic immersion $\bar\xi:\overline{M}\to \mathbb{C}P^6$. Thus conclusion (1) is proved, and (2) follows directly.

To prove (3), notice that the light-like directions in $\mathrm{Span}\{Y,{\hat Y},\eta_3\}$ represent points on a circle.
Since $\{\xi_1,\xi_2,d\xi_1,d\xi_2\}$ span a 4-dimensional spacelike subspace by \eqref{J}, the corresponding 2-parameter family of 3-dimensional mean curvature sphere congruence has an
envelop $\widehat{M}$, whose points correspond to the light-like directions in the orthogonal complement $\mathrm{Span}\{Y,{\hat Y},\eta_3\}$. In particular $[Y],[{\hat Y}]$ are two points on this circle. Such circles form a 2-parameter family, with $\overline{M}$ as the parameter space. They give a foliation of $\widehat{M}^3$ which is also a circle fibration.

We assert that every integral curve $\gamma$ of $E_3$ is contained in such a circle. Because $Span\{\xi_1,\xi_2,\eta_1,\eta_2\}(p)$ is a fixed
subspace along an integral curve of $E_3$ passing $p\in M^3$ by \eqref{3.7}$\sim$\eqref{3.4} and \eqref{FG},\eqref{Omega13},\eqref{Omega23}. Thus the integration of $Y$ along $E_3$ direction is always located in the orthogonal complement $\mathrm{Span}\{Y(p),{\hat Y}(p),\eta_3(p)\}$, which describes a circle as above.
Thus $\widehat{M}^3\supset M^3$, and each circle fiber cover an integral curve of $E_3$. This verifies (3).

To prove (4), we need only to show that for arbitrarily chosen smooth function $\lambda:M^3\to \mathbb{R}$, the corresponding $[\hat{Y}]:M^3\to \mathbb{S}^5$ is a Wintgen ideal submanifold.
This is because at one point $p\in M^3$, when $\lambda$ runs over $(-\infty,\infty)$, $[\hat{Y}(p)]$ given by \eqref{hatY} will
cover the circle fiber except $[Y(p)]$ itself;
and when $\lambda: M^3\to \mathbb{R}$ is arbitrary, all
such local mappings will cover $\widehat{M}$ by their images.

We have to compute out the Laplacian of $\hat{Y}$
with respect to its induced metric $\hat\o_1^2+\hat\o_2^2+\hat\o_3^2$ which is necessary to
determine the normal frames (the mean curvature spheres) $\{\hat\xi_r\}$ of $\hat{Y}$.
The main difficulty is that the map $\hat{Y}:M^3\to \mathbb{R}^7_1$ is generally not conformal to $Y$ . Fortunately we need only to find an orthogonal frame
$\{\hat{E}_j\}_{j=1}^3$ with the same length
for $\hat{Y}$, and then using the fact $
\mathrm{Span}_{\mathbb{R}}\{\hat{Y},\hat{Y}_j,\sum_{j=1}^3 \hat{E}_j\hat{E}_j(\hat{Y})\}
=\mathrm{Span}_{\mathbb{R}}\{\hat{Y},\hat{Y}_j, \hat{\Delta}\hat{Y}\}.$\\

\textbf{Claim:} $\hat{Y}$ shares the same mean curvature spheres
$\{\xi_1,\xi_2\}$ as $Y$.\\

This requires to show $\langle\sum_{j=1}^3 \hat{E}_j\hat{E}_j(\hat{Y}),\xi_r\rangle=0.$
Since $0=\langle\hat{Y},\xi_r\rangle=\langle d\hat{Y},\xi_r\rangle=\langle \hat{Y},d\xi_r\rangle$ by \eqref{3.7}, \eqref{3.8} and \eqref{3.10}, we need only to verify
\[
\langle \hat{Y},\sum_{j=1}^3 \hat{E}_j\hat{E}_j(\xi_r)\rangle=0,~~r=1,2.
\]
For this purpose, suppose (keeping \eqref{FG} in mind):
\[
{\hat\o_1}=\hat{F}\o_1+\hat{G}\o_2,~~ {\hat\o_2}=-\hat{G}\o_1+\hat{F}\o_2,~~
\hat\o_3=a\o_1+b\o_2+c\o_3.
\]
Notice that we can always assume $\hat{Y}$ to be an immersion
at the points where $\widehat{M}$ is regular, hence $c\ne 0$.
Then one can take
\[
\hat{E}_1 = \hat{F} E_1+\hat{G} E_2 + a_{13}E_3,~
\hat{E}_2 = -\hat{G} E_1+\hat{F} E_2 +a_{23}E_3,~
\hat{E}_3 = a_{33}E_3,
\]
where $a_{13},a_{23},a_{33}$ are uniquely determined by
$\hat\o_i(\hat{E}_j)=(\hat{F}^2+\hat{G}^2)\delta_{ij}$.
The explicit form of $a_{13},a_{23},a_{33}$ is not important,
because when we insert the formulae above to $\sum_{j=1}^3 \hat{E}_j\hat{E}_j(\xi_r)$,
the terms involving $E_3$ will always be orthogonal to $\hat{Y}$. For example, $\langle E_3(\eta_1),\hat{Y}\rangle=-\langle \eta_1,E_3(\hat{Y})\rangle=\hat\o_1(E_3)=0$ by \eqref{FG}.
Thus we need only to compute the effect on $\xi_r$ of the operator below:
\[
(\hat{F} E_1+\hat{G} E_2)^2+(-\hat{G} E_1+\hat{F} E_2)^2
\thickapprox(\hat{F}^2+\hat{G}^2)\partial\bar{\partial}.
\]
The two sides are equal up to first order differential operators like $[E_1,E_2],E_1,E_2$, whose action on $\xi_r$ must be orthogonal to $\hat{Y}$; the complex differential operators are defined as usual:
\[
\partial=E_1-iE_2,~\bar\partial=E_1+iE_2.
\]
Since $(\o_1+i\o_2)(\bar\partial)=0$,
it follows from \eqref{J} that
\begin{align*}
\bar{\partial}(\xi_1-i\xi_2)&=i\theta_{12}(\bar{\partial})
(\xi_1-i\xi_2),\\
\partial\bar{\partial}(\xi_1-i\xi_2)&\in
\mathrm{Span}_{\mathbb{C}}\{\xi_1-i\xi_2,\eta_1+i\eta_2\}~~\bot~~ \hat{Y}.
\end{align*}
This completes the proof of the previous claim.

For $\hat{Y}$ we still take its canonical lift, whose derivatives are combinations of $\hat{Y},\eta_1,\eta_2,\eta_3$; its normal frame is still $\{\xi_1,\xi_2\}$. We read from \eqref{3.7}\eqref{3.8} that its M\"obius second fundamental form still take the same form as \eqref{3.1}. Thus conclusion (4) and the whole theorem is proved.
\end{proof}
\begin{remark}
In the proof above, among the integrability equations from \eqref{3.12}$\sim$\eqref{3.26}, only \eqref{3.25} and \eqref{3.23}
are necessary for us (to deduce the algebraic form
\eqref{FG} and \eqref{Omega13},\eqref{Omega23}, where the explicit  coefficients are not important). This is somewhat striking to the authors that the strong conclusion (4) follows from so few conditions. In a forthcoming paper \cite{Li3} we will give a general treatment of codimension-two Wintgen ideal submanifolds based on the observations in this theorem.

Another interesting feature is the resemblance between  conclusion (4) and the duality theorem for Willmore surfaces
\cite{br0}.
\end{remark}

\section{Minimal Wintgen ideal submanifolds}

Since we have classified all Wintgen ideal submanifolds in \cite{Li1} whose canonical distribution $\mathbb{D}=\mathrm{Span}\{E_1,E_2\}$
is integrable (that means $L=0$), in the rest of this paper we will only consider the case
\[
L\neq0.
\]
From now on we take the frame \eqref{eta} and \eqref{hatY}, and make
the following
\begin{equation}\label{lambda}
\textbf{Assumption:}~~~~
\lambda=\frac{G}{L}=\frac{C^1_{1,1}-C^1_{2,2}}{-2 B^1_{11,3}}~.\qquad\qquad\qquad\qquad\qquad
\end{equation}
By Lemma~\ref{lem-G} we have
\begin{equation}\label{3.27}
{\hat\o_1}=\hat{F}\o_1,~~ {\hat\o_2}=\hat{F}\o_2.
\end{equation}

\begin{remark}\label{rem-hatY}
The correspondence $Y\to \hat{Y}$ describes a
self-mapping of the enveloping submanifold $\widehat{M}^3$
where $\hat{Y}$ and $Y$ are located on the same circle fiber.
At corresponding points they share the same normal vector
fields $\{\xi_r\}$, with respect to which we can talk about
principal directions.
According to Theorem~\ref{thm-envelop} and Lemma~\ref{lem-G},
the correspondence $Y\to \hat{Y}$ preserves the principal directions for any $\xi_r$ if and only if $\lambda=G/L$.
This explains the geometric significance of the condition \eqref{lambda}.
\end{remark}

Under the assumption \eqref{lambda},
\begin{equation}\label{o}
\o=-V\o_1+U\o_2+\frac{G}{L}\o_3.
\end{equation}
Together with \eqref{3.26} and \eqref{3.27} there must be
\begin{equation}
d\o=-\o_3\wedge{\hat\o}_3.\label{3.28}
\end{equation}
On the other hand, it follows from \eqref{omega} and $\lambda=G/L$ that \eqref{Omega12},\eqref{Omega13} and \eqref{Omega23} now take the form
\begin{gather}
2\Omega_{12}+\theta_{12}=U\o_1+V\o_2+L\o_3,\label{Omega12+}\\
\Omega_{13}=\frac{G}{L}\o_1+L\o_2,~~
\Omega_{23}=-L\o_1+\frac{G}{L}\o_2.\label{Omega13+}
\end{gather}
Insert these into structure equations \eqref{3.19} and \eqref{3.20}, and simplify by \eqref{3.14}, \eqref{3.15} together with \eqref{3.27}. We have
\begin{align*}
dL\wedge\o_2&=
(U\o_1+V\o_2+L\o_3)\wedge(-L\o_1+\frac{G}{L}\o_2)
-\o_1\wedge({\hat \o}_3-d\frac{G}{L})-\hat{F}\o_1\wedge\o_3,\\
dL\wedge\o_1&=
(U\o_1+V\o_2+L\o_3)\wedge(\frac{G}{L}\o_1+L\o_2)
+\o_2\wedge({\hat \o}_3-d\frac{G}{L})+\hat{F}\o_2\wedge\o_3.
\end{align*}
Comparing the coefficients of $\o_1\wedge\o_2,\o_1\wedge\o_3,\o_2\wedge\o_3$ in these two
equations separately, we obtain
\begin{align}
\hat\o_3(E_1)&=E_2(L)+UL+E_1(\frac{G}{L})-V\frac{G}{L},\label{compare1}\\
\hat\o_3(E_2)&=-E_1(L)+VL+E_2(\frac{G}{L})+U\frac{G}{L},\label{compare2}\\
\hat\o_3(E_3)&=E_3(\frac{G}{L})+L^2-\hat{F},\label{compare3}\\
E_3(L)&=G.\label{compare4}
\end{align}
Similarly, inserting \eqref{3.27} into \eqref{3.16} and \eqref{3.17} yields
\begin{align*}
d\hat{F}\wedge\o_1&=
(2\Omega_{12}+\theta_{12})\wedge \hat{F}\o_2
-\o\wedge \hat{F}\o_1+\Omega_{13}\wedge\hat\o_3,\\
d\hat{F}\wedge\o_2&=
-(2\Omega_{12}+\theta_{12})\wedge \hat{F}\o_1
-\o\wedge \hat{F}\o_2+\Omega_{23}\wedge\hat\o_3.
\end{align*}
Invoking \eqref{o},\eqref{Omega12+},\eqref{Omega13+} and comparing the coefficients of $\o_1\wedge\o_2,\o_1\wedge\o_3,\o_2\wedge\o_3$ in these two
equations separately, one gets
\begin{align}
\hat{F}&=\hat\o_3(E_3),\label{compare5}\\
E_1(\hat{F})
&=2V\hat{F}-\frac{G}{L}\hat\o_3(E_1)-L\hat\o_3(E_2),\label{compare6}\\
E_2(\hat{F})
&=-2U\hat{F}+L\hat\o_3(E_1)-\frac{G}{L}\hat\o_3(E_2),\label{compare7}\\
E_3(\hat{F})
&=-\frac{G}{L}(\hat{F}+\hat\o_3(E_3))=-2\frac{G}{L}\cdot\hat{F}.\label{compare8}
\end{align}

\begin{remark}\label{rem-invariant}
We point out that $L,G,\o,\hat{F}$ are well-defined M\"obius invariants.
It is necessary and sufficient to verify that they are independent to the choice of the frames, or equivalently, that they are invariant under the transformation \eqref{transform}.
This is obvious for $L$ by \eqref{L}, for $G$ by \eqref{compare4}, and for $\o$ by $\o=\langle dY,{\hat Y}\rangle$ where the frame vectors $\hat{Y},\eta_3$ are now canonically chosen after taking $\lambda=\frac{G}{L}$.
For $\hat{F}$ we can verify the invariance under the transformation \eqref{transform} by $\hat{F}=\langle E_1({\hat Y}),\eta_1\rangle$, or just using \eqref{compare5}.

On the other hand, $U,V$ correspond to $\{C^r_i\}$, components  of the M\"obius form, which depend on the choice of $\{E_1,E_2\}$ and $\{\xi_1,\xi_2\}$. Yet in that case we can choose the angle
$t$ in \eqref{transform} suitably such that $V=0$ identically.
Then the new function $U$ is well-defined and M\"obius invariant. (There are other choice of the frame in a canonical way, and any of them works in the proof to Theorem~\ref{thm-homog} later.)
\end{remark}

Now we can state our M\"{o}bius characterization theorem for minimal Wintgen ideal submanifolds of dimension three in five dimensional space form.

\begin{theorem}\label{thm-minimal}
Let $x: M^3\to {\mathbb S}^{5}$ be
a Wintgen ideal submanifold. Assume the distribution $\mathbb{D}=\mathrm{Span}\{E_1, E_2\}$ to be non-integrable, i.e., $L\ne 0$.
Then the following conditions are equivalent:

(1) $ d\o=0,$ where $\o=-V\o_1+U\o_2+\frac{G}{L}\o_3$.

(2) The correspondence $Y\to \hat{Y}$ of the enveloping submanifold $\widehat{M}^3$ is a conformal map. ($\hat{Y}$ might be degenerate.)

(3) $x: M^3\to {\mathbb S}^{5}$ is M\"obius equivalent to a
minimal Wintgen ideal submanifold in a space form.
In particular, the space form is $\mathbb{S}^5$,$\mathbb{R}^5$
or $\mathbb{H}^5$ depending on whether $\hat{F}$ is positive, zero or negative.
\end{theorem}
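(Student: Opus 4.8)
The plan is to prove the equivalences as a cycle $(1)\Leftrightarrow(2)$, $(1)\Rightarrow(3)$, $(3)\Rightarrow(1)$, working throughout in the canonical frame \eqref{eta},\eqref{hatY} under the Assumption \eqref{lambda}, so that \eqref{3.27}--\eqref{compare8} are at our disposal. For $(1)\Leftrightarrow(2)$: by \eqref{3.27} and \eqref{compare5} we may write $\hat\o_1=\hat F\o_1$, $\hat\o_2=\hat F\o_2$ and $\hat\o_3=a\o_1+b\o_2+\hat F\o_3$ for suitable functions $a,b$. By \eqref{3.10} the first fundamental form of $\hat Y$ is $\hat\o_1^2+\hat\o_2^2+\hat\o_3^2$, whose matrix relative to $\{\o_1,\o_2,\o_3\}$ is a multiple of the identity (allowing the degenerate case $\hat F\equiv0$) precisely when its diagonal entries $\hat F^2+a^2$, $\hat F^2+b^2$, $\hat F^2$ coincide, i.e. when $a=b=0$. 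On the other hand \eqref{3.28} reads $d\o=-\o_3\wedge\hat\o_3=a\,\o_1\wedge\o_3+b\,\o_2\wedge\o_3$, so $d\o=0$ is also equivalent to $a=b=0$. Hence $(1)\Leftrightarrow(2)$, and moreover $(1)$ forces $\hat\o_i=\hat F\o_i$ for $i=1,2,3$.

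For $(1)\Rightarrow(3)$: assuming $d\o=0$ we have $\hat\o_i=\hat F\o_i$ for all $i$, and I would feed this back into \eqref{compare6}--\eqref{compare8}; the right-hand sides collapse and, recalling \eqref{o}, give the key identity $d\hat F=-2\hat F\,\o$. Writing $\o=d\psi$ locally, I claim $v:=e^{\psi}(\hat Y-\hat F\,Y)$ is a constant vector of $\mathbb R^7_1$: this is a direct computation with \eqref{3.9},\eqref{3.10}, $\hat\o_i=\hat F\o_i$ and $d\hat F=-2\hat F\o$, in which all terms cancel. Then $\langle v,\xi_r\rangle=0$ (since $Y,\hat Y\perp\xi_r$) and $\langle v,v\rangle=-2\hat F e^{2\psi}$ is a nonzero constant (or $\equiv0$), so $\hat F$ has a fixed sign on (connected) $M^3$ and $v$ is timelike, null or spacelike according as $\hat F>0$, $=0$ or $<0$. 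A M\"obius transformation carrying $v$ to the standard constant vector $g$ of $\mathbb S^5$, $\mathbb R^5$ or $\mathbb H^5$ (cf. Remarks~\ref{rem-xi},\ref{rem-xi2}) then turns $\langle\xi_r,g\rangle=0$ into $H^r=0$, so the image is minimal in the corresponding space form, and still Wintgen ideal by conformal invariance of the DDVV equality. This yields $(3)$ together with the asserted dependence on the sign of $\hat F$.

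For $(3)\Rightarrow(1)$: since $\o$ is a M\"obius invariant (Remark~\ref{rem-invariant}), we may assume $x$ itself is minimal in some $\mathbb Q^5(c)$, with associated constant vector $g$ ($\langle g,g\rangle=-c$) characterized by $\langle\xi_r,g\rangle=H^r=0$. Then $g\in\mathrm{Span}\{Y,\hat Y,\eta_1,\eta_2,\eta_3\}$; I would write $g=pY+q\hat Y+\sum_i r_i\eta_i$ and impose $dg=0$, reading off coefficients in \eqref{3.3}--\eqref{3.5},\eqref{3.9},\eqref{3.10}: the $\xi_r$-components give $r_1=r_2=0$; the $\eta_1$-component, using \eqref{Omega13+}, gives $r_3L=0$ hence $r_3=0$ (as $L\ne0$) together with $p=-q\hat F$; and the $\hat Y$-component gives $dq=q\,\o$. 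Since $g=q(\hat Y-\hat F Y)\ne0$ forces $q\ne0$ everywhere, $\o=d\ln|q|$ is exact and $d\o=0$.

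The main obstacle is the implication $(1)\Rightarrow(3)$: one must identify the correct parallel vector $v=e^{\psi}(\hat Y-\hat F Y)$ and verify $dv=0$, which rests on the auxiliary identity $d\hat F=-2\hat F\,\o$, and it is exactly here that the Assumption \eqref{lambda}, built into the definition of $\hat Y$ so that $\hat\o_1=\hat F\o_1$, $\hat\o_2=\hat F\o_2$ hold identically, does its work; once $\o$ is closed, this vector is what ``integrates'' the $1$-form into a genuine M\"obius symmetry of $x$. A secondary point requiring care is handling the three causal types of $v$ uniformly, in particular the null case $\hat F\equiv0$, where $[\hat Y]$ degenerates to a single point (the point at infinity of the target $\mathbb R^5$). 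By comparison, $(1)\Leftrightarrow(2)$ and $(3)\Rightarrow(1)$ are essentially bookkeeping with the structure equations already assembled in Sections~3--4.
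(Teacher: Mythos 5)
Your argument is correct, and two of its three legs coincide with the paper's. The equivalence $(1)\Leftrightarrow(2)$ is the same bookkeeping the paper does with \eqref{3.27}, \eqref{compare5} and \eqref{3.28}, and your $(1)\Rightarrow(3)$ is the paper's argument in only slightly different clothing: your constant vector $v=e^{\psi}(\hat Y-\hat F Y)$ is exactly the paper's parallel line $\mathrm{Span}\{\hat FY-\hat Y\}$ from \eqref{3.63}, normalized by $e^{\psi}$ instead of by $\sqrt{\pm 2\hat F}$ (which is the same normalization, since $d\hat F=-2\hat F\o$ makes $\hat Fe^{2\psi}$ constant); the case split on the causal type of $v$, the identification $H^r=\langle\xi_r,g\rangle$ via Remarks~\ref{rem-xi} and \ref{rem-xi2}, and the handling of the degenerate case $\hat F\equiv 0$ all match. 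Where you genuinely diverge is $(3)\Rightarrow(1)$. The paper works extrinsically: it writes the second fundamental form \eqref{h} of the minimal immersion, computes $\rho^2=6\nu^2$, the M\"obius form via \eqref{2.23}, and the covariant derivatives $C^1_{1,1}-C^1_{2,2}$, arriving at $G/L=E_3(\nu)/\nu$ and the explicit formula \eqref{3.77} showing $\o=d\ln\nu$ is exact. You instead encode minimality as the existence of a constant vector $g$ orthogonal to $\xi_1,\xi_2$, expand $g$ in the moving frame, and read off from $dg=0$ (using $\mu\neq 0$ for $r_1=r_2=0$, and $L\neq 0$ with \eqref{Omega13+} for $r_3=0$) that $g=q(\hat Y-\hat FY)$ with $dq=q\o$, whence $\o=d\ln|q|$. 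I checked the component equations: they are as you state, and the $Y$- and $\eta_3$-components give back $d\hat F=-2\hat F\o$ and $\hat\o_3=\hat F\o_3$ rather than any contradiction. Your route is frame-theoretic and avoids the isometric-invariant computation entirely, making it visibly the converse of $(1)\Rightarrow(3)$; the paper's computation is longer but yields the explicit identities $G/L=E_3(\nu)/\nu$ and $\o=d\ln\nu$, which it reuses later (e.g.\ in the Hopf-lift characterization, where $G=E_3(\nu)=0$ is the relevant condition). Both are complete proofs.
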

\begin{proof}
If $Y$ and $\hat Y$ are conformal, then there exists a non-negative function $a$ so that
\begin{equation}
\hat\o_1^2+\hat\o_2^2+\hat\o_3^2=a(\o_1^2+\o_2^2+\o_3^2).\label{3.51}
\end{equation}
Combining with \eqref{3.27} and \eqref{3.28} we have $d\o=0$.
Thus (2) implies (1).

Next we show (1) implies (2) and (3). If $d\o=0$, it follows from \eqref{3.28}\eqref{compare5} that $\hat \o_3=\hat{F}\o_3$. Together with \eqref{3.27}, $\hat \o_j=\hat{F}\o_j$ for $j=1,2,3$. So $Y$ and $\hat Y$ are conformal, and (2) is proved.

Using \eqref{compare6}$\sim$\eqref{compare8} we get
\begin{equation}
d\hat{F}+2\hat{F}\omega = 0.\label{3.50}
\end{equation}
Now the structure equations can be rewritten as below:
\begin{align}
d(\hat{F}Y+{\hat Y})&=-\o (\hat{F}Y+{\hat Y})+2\hat{F}(\o_1\eta_1+\o_2\eta_2+\o_3\eta_3);\label{3.57}\\
d\eta_1&=-{\o}_1(\hat{F}Y+{\hat
Y})+\Omega_{12}\eta_2+L\o_2\eta_3+\mu\o_2\xi_1+\mu\o_1\xi_2;\\
d\eta_2&=-{\o}_2(\hat{F}Y+{\hat
Y})-\Omega_{12}\eta_1-L\o_1\eta_3+\mu\o_1\xi_1-\mu\o_2\xi_2;\\
d\eta_3&=-{\o}_3(\hat{F}Y+{\hat
Y})-L\o_2\eta_1+L\o_1\eta_2;\\
d\xi_{1}&=-\mu\o_2\eta_1-\mu\o_1\eta_2+\theta_{12}\xi_2;\\
d\xi_{2}&=-\mu \o_1\eta_1+\mu\o_2\eta_2-\theta_{12}\xi_1;\label{3.62}\\
d({\hat{F}Y-\hat Y})&=-\o (\hat{F}Y-{\hat Y}).  \label{3.63}
\end{align}
So $\mathrm{Span}\{\hat{F}Y-\hat Y\}$ is parallel along $M^3$, as well as its orthogonal complement
\[
\mathbb{V}^6=\mathrm{Span}\{\hat{F}Y+\hat Y, \eta_1, \eta_2, \eta_3, \xi_1, \xi_2\}~.
\]
That means both of them are fixed subspaces of $\mathbb{R}^7_1$.
The type of the inner product restricted on these subspaces
depends on the sign of $\hat{F}$, which will not change on a connected open set, because $\hat{F}$ satisfies a linear PDE \eqref{3.50}. We discuss them case by case.

 {\bf Case 1:} $\hat{F}>0$.
This case $\mathbb{V}^6$ is a fixed space-like subspace orthogonal to a fixed time-like line $\mathbb{V}^\perp$. Define
\[
f=\frac{\hat{F}Y+\hat Y}{\sqrt{2\hat{F}}},\qquad   g=\frac{\hat{F}Y-\hat Y}{\sqrt{2\hat{F}}},
\]
which satisfy $\langle f,f\rangle=1,\langle g, g\rangle=-1$.
So $g \in \mathbb{V}^\perp$ is a constant time-like vector, and $f: M^3 \rightarrow \mathbb{S}^5 \subset \mathbb{V}$ is a submanifold in the sphere.

Assume $g=(1,\vec{0})$. This is without loss of generality since we can always apply a Lorentz transformation in $\mathbb{R}^{7}_{1}$ to $Y$ and its frame at the beginning if necessary, whose effect on $x(M)\subset\mathbb{S}^5$ is a
M\"obius transformation. Then from the geometric meaning of the mean curvature sphere $\xi_r$ explained in Remark~\ref{rem-xi}, we know that $x: M^3\rightarrow \mathbb \mathbb{S}^{5}$ is a minimal Wintgen ideal submanifold (up to a conformal transformation).

Note that this special minimal submanifold can now be identified with $f$ since we have
\[
f+g=\sqrt{2\hat{F}}Y=\sqrt{2\hat{F}}\rho (1,x).
\]
Comparison shows $\sqrt{2\hat{F}}\rho=1$ and $x=f$.
That $f:M^3 \rightarrow \mathbb{S}^5 \subset \mathbb{V}$ is minimal can be verified directly by \eqref{3.57}$\sim$\eqref{3.62}.

{\bf Case 2:} $\hat{F}<0$.
In this case, $\mathbb{V}^6$ is a fixed Lorentz subspace
orthogonal to a constant space-like line $\mathbb{V}^\perp$. Define $f=(\hat{F}Y+\hat Y)/\sqrt{-2\hat{F}}, g=(\hat{F}Y-\hat Y)/\sqrt{-2\hat{F}}.$
Then similar to Case~1 we know $x$ is M\"obius equivalent to
$f: M^3 \rightarrow \mathbb{H}^5 \subset \mathbb{V}\cong \mathbb{R}^6_1$ which is a minimal Wintgen ideal submanifold.

{\bf Case 3:} $\hat{F}\equiv 0$.
Now $\hat \o_i \equiv0, ~i=1,2,3$. So $d\hat Y= -\o\hat{Y}$.
That means $\hat Y$ determines a constant light-like direction. From $d\o=0$, we get $w=d\tau$ for some locally defined function $\tau$. Up to a Lorentz transformation one may take $e^{\tau}\hat Y=(-1, 1, \vec{0})$ which is still denoted by $g$. Since $\langle \xi_r,g\rangle=0$, from the geometric meaning of the mean curvature sphere $\xi_r$ explained in Remark~\ref{rem-xi2} we know that $x$ is a three dimensional minimal Wintgen ideal submanifold in $\mathbb{R}^5$ (up to a suitable conformal transformation).\\

Finally we show (3) implies (1), i.e., for
any minimal Wintgen ideal submanifold $x: M^3 \rightarrow \mathbb{Q}^5(c)$ whose distribution $\mathbb{D}=\mathrm{Span}\{E_1, E_2\}$ is not integrable, there is always $d\o=0$.

By assumption, for this $x$ we can always take local orthonormal frame $\{e_1, e_2, e_3\}$ and $\{n_1, n_2\}$ for the tangent and normal bundles, such that the second fundamental form is given by
\begin{equation}
h^1=\begin{pmatrix}
0& \nu&0\\
\nu&0&0\\
0&0&0
\end{pmatrix}, \qquad
h^2=\begin{pmatrix}
\nu&0&0\\
0&-\nu&0\\
0&0&0
\end{pmatrix}.\label{h}
\end{equation}
It follows that $\rho^2=6\nu^2$ by \eqref{2.2}.
From Remark~2.2, using \eqref{2.23} we always have
\begin{equation}
C^1_1=-C^2_2=-\frac{e_2(\nu)}{6\nu^2}, ~~~~~~~C_1^2=C^2_1=-\frac{e_1(\nu)}{6\nu^2}.
\end{equation}
Consider the M\"obius position vector $Y: M^3 \rightarrow \mathbb{R}^7_1$ of $x$ defined by \eqref{2.2}. For the M\"obius metric
\[
\mathrm{g}=\langle dY, dY\rangle=\rho^2 dx^2=6\nu^2 dx^2,
\]
we can choose $\{E_1=\frac{e_1}{\sqrt{6}\nu}, \;E_2=\frac{e_2}{\sqrt{6}\nu}, \;E_3=\frac{e_3}{\sqrt{6}\nu}\}$ as a set of local orthonormal basis for $(M^3, \mathrm{g})$ with the dual basis $\{\o_1, \;\o_2, \;\o_3\}$.
By \eqref{omega},
\[
2\o_{12}+\theta_{12}=-U\o_1-V\o_2+L\o_3,
\]
We have
\[
C^1_{1,1}=E_1(C^1_1)+C^1_2(\o_{21}+\theta_{21})(E_1)
=\frac{1}{\sqrt{6}\nu}(-E_1(E_2(\nu))+E_1(\nu)\o_{21}(E_1)),
\]
\[
C^1_{2,2}=E_2(C^1_2)+C^1_1(\o_{12}+\theta_{12})(E_2)
=\frac{1}{\sqrt{6}\nu}(-E_2(E_1(\nu))+E_2(\nu)\o_{12}(E_2)).
\]
Using \eqref{omega} we get
\begin{equation*}
C^1_{1,1}-C^1_{2,2}=2\frac{E_3(\nu)}{\nu} \mu L.
\end{equation*}
So we get that $\frac{G}{L}=\frac{E_3(\nu)}{\nu}$. Since
\begin{equation}
\o=-\sqrt{6}(C^1_2\o_1+C^1_1\o_2)+\frac{G}{L}\o_3
=\frac{E_1(\nu)}{\nu}\o_1+\frac{E_2(\nu)}{\nu}\o_2
+\frac{E_3(\nu)}{\nu}\o_3,\label{3.77}
\end{equation}
it is obvious that $\o$ is an exact 1-form, and $d\o=0$.
This finishes the proof.
\end{proof}
\begin{remark}
In the proof of (1)$\Rightarrow$(3), there is always a constant vector $g$ orthogonal to $\mathrm{Span}\{\xi_1,\xi_2,\eta_1,\eta_2\}$ in either of the three cases. Thus in the foliation described
in (3) of Theorem~\ref{thm-envelop}, each leave is now a geodesic in the corresponding space form. In other words, they are ruled submanifolds. This fact is already known in the study of
austere submanifolds \cite{br},\cite{Dajczer3},\cite{Lu}.
\end{remark}

\section{Two M\"obius characterization results}

In this section we will give two characterization theorems
(in terms of M\"obius invariants) related with the following
minimal Wintgen ideal submanifolds in $\mathbb{S}^5$.

\begin{example}\label{ex1}
Let $\gamma:N^2\to \mathbb{C}P^2$ be a holomorphic curve, and $\pi: \mathbb{S}^5 \rightarrow \mathbb{C}P^2$ be the Hopf fibration.
Then the circle bundle $M^3 \subset \mathbb{S}^5$ over $N^2$ obtained by taking the Hopf fibers over $\gamma(N^2)$ is a three dimensional minimal Wintgen ideal submanifold in $\mathbb{S}^5$
as pointed out in \cite{Smet} (Example~6).
\end{example}

Observe that $S^1$ acts by isometry on $\mathbb{S}^5$
whose orbits give the Hopf fibration.
Thus for $M^3$ as above it has an induced $S^1$ symmetry.
Consider the second fundamental forms given in \eqref{h}; the
invariant $\nu$ must be a constant along every orbit of this $S^1$ action, which is exactly an integral curve of $E_3$
(see \cite{Smet} for details where they use $\xi$ to denote this $E_3$). So we have $G=E_3(\nu)=0$ in this special case. Since they are minimal, by Theorem~\ref{thm-minimal} we have $d\o=0$.
These conditions characterize this class of submanifolds as below.

\begin{theorem}\label{thm-Hopf}
Let $x:M^3\rightarrow\mathbb{S}^5$ be a Wintgen ideal submanifold with non-integrable distribution $\mathbb{D}=\mathrm{Span}\{E_1,E_2\}$. If it satisfies $d\o=0, G=0$, then up to a M\"obius transformation on $\mathbb{S}^5$, $x$ is the Hopf lift of a
holomorphic curve given in Example~\ref{ex1}.
\end{theorem}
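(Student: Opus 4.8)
The plan is to combine the hypotheses $d\omega=0$ and $G=0$ with the structure equations from Section~4 in order to produce, in addition to the constant vector $g$ coming from Theorem~\ref{thm-minimal}, a second structure: a one-parameter group of Möbius isometries preserving $x$ whose orbits are exactly the integral curves of $E_3$, realized (after the reduction of Theorem~\ref{thm-minimal}) as the Hopf action on $\mathbb{S}^5$. First I would invoke Theorem~\ref{thm-minimal}: since $d\omega=0$ and $L\ne 0$, $x$ is Möbius equivalent to a minimal Wintgen ideal submanifold $f$ in $\mathbb{S}^5$, $\mathbb{R}^5$ or $\mathbb{H}^5$ according to the sign of $\hat F$; so I may assume $x=f$ is minimal in one of these space forms. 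Then I would feed $G=0$ into the comparison equations \eqref{compare1}--\eqref{compare8}: with $G=0$ we get $E_3(L)=0$, $E_3(\hat F)=0$ and $\hat\omega_3(E_3)=L^2-\hat F=\hat F$, hence $\hat F=L^2/2\ge 0$ is constant along the $E_3$-leaves; combined with \eqref{compare6}--\eqref{compare7} one should be able to pin down $\hat F$ (and therefore which space form occurs — I expect $\mathbb{S}^5$, matching Example~\ref{ex1}) and simplify the connection forms considerably. In particular $\omega=-V\omega_1+U\omega_2$ acquires no $\omega_3$-component when $G=0$, so $E_3$ lies in the kernel of $\omega$, and the frame $\{Y,\hat Y,\eta_1,\eta_2,\eta_3,\xi_1,\xi_2\}$ becomes essentially constant along $E_3$ up to the closed rotation forms $\theta_{12}$, $\Omega_{12}$.

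Next I would identify the Killing field. Along an integral curve of $E_3$, equations \eqref{3.7}--\eqref{3.10} (simplified by $\hat\omega_1=\hat\omega_2=\hat F\omega_1,\hat F\omega_2$, by $\hat\omega_3=\hat F\omega_3$, and by the $G=0$ reductions of \eqref{omega}, \eqref{Omega12+}, \eqref{Omega13+}) show that the derivative in the $E_3$-direction acts on the moving frame as a fixed skew-symmetric (Lorentz) endomorphism $\mathfrak{X}$ of $\mathbb{R}^7_1$, whose off-diagonal blocks are built from the constants $L$ and $\mu$ and from $\theta_{12}(E_3)=L$, $\Omega_{12}(E_3)=\tfrac12(L\omega_3-\theta_{12})(E_3)$. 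Because $\mathfrak{X}$ is independent of the point and $E_3$ has $\omega_i(E_3)=\delta_{i3}$-type behaviour, exponentiating $\mathfrak{X}$ gives a one-parameter subgroup of $O(6,1)$ that moves $Y(p)$ along its $E_3$-leaf — i.e. a one-parameter group of Möbius transformations of $\mathbb{S}^5$ preserving $x(M^3)$, with the $E_3$-leaves as orbits. After passing to the minimal representative in $\mathbb{S}^5$ (Case~1 of Theorem~\ref{thm-minimal}), this one-parameter group fixes the timelike vector $g$, hence descends to a one-parameter group of isometries of $\mathbb{S}^5$ itself whose orbits are the (geodesic, by the Remark after Theorem~\ref{thm-minimal}) $E_3$-leaves — in fact great circles, since the orbit is a closed geodesic of a periodic isometry.

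Finally I would recognize this structure as the Hopf fibration. A one-parameter group of isometries of $\mathbb{S}^5\subset\mathbb{R}^6=\mathbb{C}^3$ with all orbits great circles and acting freely is, up to conjugation in $O(6)$, the standard $S^1$-action $z\mapsto e^{i t}z$ (the eigenvalues of the generating skew matrix must all be equal in absolute value because every orbit is a circle of the same length, forcing a complex structure commuting with the action); its orbit map is the Hopf fibration $\pi:\mathbb{S}^5\to\mathbb{C}P^2$. Then $M^3$, being $S^1$-invariant and three-dimensional, is $\pi^{-1}(N^2)$ for a surface $N^2=\pi(M^3)\subset\mathbb{C}P^2$; and the condition that $M^3$ is minimal and Wintgen ideal forces $N^2$ to be a holomorphic curve (one checks that the second fundamental form \eqref{h}, pushed down, is the real part of a $\mathbb{C}$-bilinear form on $TN^2$, equivalently that the $\bar\partial$-operator on the relevant bundle annihilates $\gamma$ — this is exactly the computation underlying Example~\ref{ex1} in \cite{Smet}, run in reverse). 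Thus $x$ is the Hopf lift of the holomorphic curve $\gamma=N^2\hookrightarrow\mathbb{C}P^2$, as claimed.

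The main obstacle I expect is the middle step: verifying that the $E_3$-derivative of the moving frame really is governed by a \emph{point-independent} endomorphism and that its integral is a \emph{globally defined periodic} Möbius transformation rather than merely a local flow. This requires carefully tracking the $\omega_3$-components of all connection forms under the $G=0$ hypothesis — in particular checking that $U$, $V$, $\hat F$, $L$ and the rotational forms $\theta_{12},\Omega_{12}$ restrict to constants (or to the correct linear expressions) along each $E_3$-leaf, so that the flow closes up after a fixed period independent of the leaf. Granting the reduction theorems of Section~4, this is a finite computation with the displayed structure equations, but it is the crux; once the periodic isometric $S^1$-action is in hand, the identification with the Hopf fibration and the holomorphicity of $N^2$ are standard.
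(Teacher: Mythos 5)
Your opening moves match the paper exactly: from $G=0$, \eqref{compare3} and \eqref{compare5} give $2\hat F=L^2>0$, so Case 1 of Theorem~\ref{thm-minimal} applies and one may replace $x$ by the minimal representative $f=(\hat FY+\hat Y)/\sqrt{2\hat F}$ in $\mathbb{S}^5\subset\mathbb{R}^6$. But your middle step --- exponentiating the $E_3$-derivative of the moving frame into a single one-parameter group of isometries --- has a genuine gap, and it is exactly at the point you yourself flag as the crux. Writing the frame as $T$ with $dT=\Theta T$ as in \eqref{Theta1}, the endomorphism governing the $E_3$-evolution is block-diagonal with $2\times2$ blocks proportional to $L$, $\Omega_{12}(E_3)$ and $-\theta_{12}(E_3)$. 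These three coefficients are \emph{not} equal: only the combination $2\Omega_{12}(E_3)+\theta_{12}(E_3)=L$ is controlled by \eqref{Omega12+} (in the model Example~\ref{ex2} one has $\Omega_{12}(E_3)=0$ and $\theta_{12}(E_3)=L$, so the blocks are $L,0,-L$), and $\Omega_{12},\theta_{12}$ separately are not even frame-independent under \eqref{transform}. Consequently $T^{-1}\Theta(E_3)T$ is not a parallel endomorphism of the fixed $\mathbb{R}^6$ (its covariant derivative picks up off-diagonal terms $(a_i-a_j)J_1\Theta_{ij}\neq0$), so different $E_3$-leaves generate \emph{different} one-parameter subgroups of $O(6)$, and no single global $S^1$-action is produced this way. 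Indeed the leafwise frame flow is not the Hopf flow even in the homogeneous example.

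The paper circumvents this entirely: the key observation is that the standard block-diagonal complex structure $J_0$ commutes with the \emph{whole} connection matrix $\Theta$ (every $2\times2$ block of $\Theta$ is of the form $\alpha I+\beta J_1$), whence $J=T^{-1}J_0T$ satisfies $dJ=T^{-1}[J_0,\Theta]T=0$ and defines a genuine constant complex structure $\mathbf J$ on $\mathbb{R}^6\cong\mathbb{C}^3$. The Hopf fibration is then the one associated to $\mathbf J$, the fiber through $f$ is the unit circle of $\mathrm{Span}_{\mathbb{C}}\{f+i\eta_3\}$ (which is the leaf of $E_3$ by Theorem~\ref{thm-envelop}(3)), and equation \eqref{5.1} directly exhibits $[f+i\eta_3]:\overline M\to\mathbb{C}P^2$ as a holomorphic curve --- so neither a periodicity argument nor the ``reverse'' holomorphicity computation you defer to \cite{Smet} is needed. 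If you want to keep a Killing-field formulation, the generator you must exhibit is $\mathbf J$ itself, not $T^{-1}\Theta(E_3)T$, and proving $\mathbf J$ is well defined is precisely the commutation argument above.
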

\begin{proof}
When $G=0$, comparing the coefficients of $\o_1\wedge\o_3$ in \eqref{compare3} and using \eqref{compare5} yields $2\hat{F}=L^2$.
From the proof to theorem~\ref{thm-minimal} we know that $x$ is M\"obius equivalent to a minimal Wintgen ideal submanifold
\[
f=\frac{\hat{F}Y+\hat Y}{\sqrt{2\hat{F}}}: M^3\rightarrow \mathbb{S}^5 \subset \mathbb{R}^6,
\]
where $\mathbb{R}^6=\mathrm{Span}_{\mathbb{R}}
\{f,\eta_3,\eta_1,\eta_2,\xi_1,-\xi_2\}$.
Using \eqref{3.7}$\sim$\eqref{3.10} and $2\hat{F}=L^2, d\hat{F}=-2\o\hat{F}$, with respect to this frame
we can write out the structure equations of $f$:
\begin{equation}\label{Theta1}
d\begin{pmatrix}
f\\ \eta_3\\ \eta_1\\ \eta_2\\ \xi_1\\ -\xi_2\end{pmatrix}=\begin{pmatrix}
0& L\o_3& L\o_1& L\o_2& 0& 0\\
-L\o_3& 0& -L\o_2& L\o_1& 0& 0\\
-L\o_1& L\o_2& 0& \Omega_{12}& \mu\o_2& -\mu\o_1\\
-L\o_2& -L\o_1& -\Omega_{12}& 0& \mu\o_1& \mu\o_2\\
0& 0& -\mu\o_2& -\mu\o_1& 0& -\theta_{12}\\
0& 0&  \mu\o_1& -\mu\o_2& \theta_{12}& 0
\end{pmatrix}\begin{pmatrix}
f\\ \eta_3\\ \eta_1\\ \eta_2\\ \xi_1\\ -\xi_2\end{pmatrix}.
\end{equation}
Denote the frame as a matrix $T:M^3\to \mathrm{SO}(6)$ with respect to a fixed basis $\{{\bf e}_k\}_{k=1}^6$ of
$\mathbb{R}^6$, we can rewrite \eqref{Theta1} as
\begin{equation}\label{Theta2}
dT=\Theta T.
\end{equation}
The algebraic form of $\Theta$ motivates us to introduce a complex structure ${\bf J}$ on $\mathbb{R}^6= \mathrm{Span}_\mathbb{R}\{f,\eta_3,\eta_1,\eta_2,\xi_1,\xi_2\}$ as below:
\[
{\bf J}\begin{pmatrix}f\\
\eta_3\\ \eta_1\\ \eta_2\\ \xi_1\\ -\xi_2\end{pmatrix}
=\begin{pmatrix}
\begin{pmatrix}0& -1\\1 & 0\end{pmatrix} & & \\
& \begin{pmatrix}0& -1\\1 & 0\end{pmatrix} & \\
& &\begin{pmatrix}0& -1\\1 & 0\end{pmatrix}
\end{pmatrix}\begin{pmatrix}
f\\ \eta_3 \\ \eta_1 \\ \eta_2 \\ \xi_1 \\ -\xi_2
\end{pmatrix}.
\]
Denote the diagonal matrix at the right hand side as $J_0$. Then the matrix representation of operator ${\bf J}$ under
$\{{\bf e}_k\}_{k=1}^6$ is:
\[
J=T^{-1}J_0T.
\]
Using $dT=\Theta T$ and the fact that $J_0$ commutes with $\Theta$, it is easy to verify
\[
dJ=-T^{-1}dT T^{-1}J_0T+T^{-1}J_0 dT
=-T^{-1}\Theta J_0T+T^{-1}J_0 \Theta T=0.
\]
So ${\bf J}$ is a well-defined complex structure on
this $\mathbb{R}^6$.

Another way to look at the structure equations \eqref{Theta1}
is to consider the complex version:
\begin{align}
d(f+i\eta_3)&=-iL\o_3(f+i\eta_3)+L(\o_1-i\o_2)(\eta_1+i\eta_2), \label{5.1}\\
d(\eta_1+i\eta_2)&=-L(\o_1+i\o_2)(f+i\eta_3)-i\Omega_{12}(\eta_1+i\eta_2)
+i\mu(\o_1-i\o_2)(\xi_1-i\xi_2),
\notag\\
d(\xi_1-i\xi_2)&=i\mu(\o_1+i\o_2)(\eta_1+i\eta_2)
+i\theta_{12}(\xi_1-i\xi_2).\label{5.3}
\end{align}
Geometrically, this implies that
\[
\mathbb{C}^3=\mathrm{Span}_{\mathbb{C}}\{f+i\eta_3,
\eta_1+i\eta_2,\xi_1-i\xi_2\},
\]
is a fixed three dimensional complex vector space endowed with the complex structure $i$, which is identified with $(\mathbb{R}^6,{\bf J})$ via the following isomorphism between
complex linear spaces:
\[
v\in \mathbb{C}^3~~\mapsto~~\mathrm{Re}(v)\in \mathbb{R}^6.
\]
For example, $f+i\eta_3\mapsto f,if-\eta_3\mapsto -\eta_3$ and
so on.

The second geometrical conclusion is an interpretation of \eqref{5.1} that $[f+i\eta_3]$
defines a holomorphic mapping from the quotient surface $\overline{M}=M^3/\Gamma$ to the projective plane $\mathbb{C}P^2$ (like the conclusion (1) in Theorem~\ref{thm-envelop}).
Moreover, the unit circle in
\[
\mathrm{Span}_{\mathbb{R}}\{f,\eta_3\}=\mathrm{Span}_{\mathbb{R}}\{f,{\bf J}f\}=\mathrm{Span}_{\mathbb{C}}\{f+i\eta_3\}
\]
is a fiber of the Hopf fibration of $\mathbb{S}^5\subset (\mathbb{R}^6,{\bf J})$. It corresponds to the subspace
$\mathrm{Span}_{\mathbb{R}}\{Y,\hat{Y},\eta_3\}$, which is geometrically
a leave of the foliation $(M^3,\Gamma)$ as described by conclusion (3) in Theorem~\ref{thm-envelop}.
Thus the whole $M^3$ is the Hopf lift of $\overline{M}\to\mathbb{C}P^2$.
In other words we have the following commutative diagram
\begin{equation*}
\begin{xy}
(30,30)*+{M^3}="v1", (60,30)*+{\mathbb{S}^5}="v2", (90,30)*+{\mathbb{C}^3}="v3";%
(30,0)*+{\overline{M}}="v4", (60,0)*+{\mathbb{C}P^2}="v5".%
{\ar@{->}^{f} "v1"; "v2"}%
{\ar@{->}^{\subset} "v2"; "v3"}%
{\ar@{->}_{M^3/\Gamma} "v1"; "v4"}%
{\ar@{->}_{[f+i\eta_3]} "v1"; "v5"}%
{\ar@{->}^{} "v4"; "v5"}%
{\ar@{->}^{\pi} "v2"; "v5"}%
{\ar@{->}_{\pi} "v3"; "v5"}%
\end{xy}
\end{equation*}
This finishes the proof.
\end{proof}

Among examples given above, there is a special one coming from
the lift of the famous Veronese embedding $\gamma:\mathbb{C}P^1\to \mathbb{C}P^2$ which is homogeneous.
Thus the lift $M^3$ is itself a homogeneous minimal Wintgen ideal submanifold in $\mathbb{S}^5$.
This special example can also be described as below.
\begin{example}\label{ex2}
The orthogonal group $\mathrm{SO}(3)$ embedded in $\mathbb{S}^5$ homogeneously:
\begin{equation}\label{SO3}
x: \quad \mathrm{SO}(3)~\rightarrow~ \mathbb{S}^5, \qquad
(u, v, u\times v)  \mapsto \frac{1}{\sqrt{2}}(u, v).
\end{equation}
The orthonormal frames of the tangent and normal bundles can be chosen as
$
e_1=(u\times v, 0), e_2=(0, u\times v), e_3=\frac{1}{\sqrt{2}}(-v, u);~~
n_1=\frac{-1}{\sqrt{2}}(v, u), n_2=\frac{-1}{\sqrt{2}}(u, -v).
$
Direct computation verifies that it is a minimal Wintgen ideal submanifold.
\end{example}

Consider the canonical lift $Y=\sqrt{6}(1,x): \mathrm{SO}(3)\rightarrow \mathbb{R}^7_1.$
The M\"obius metric is given by $\mathrm{g}=6dx\cdot dx.$ It follows from \eqref{2.23} that the M\"obius form vanishes, i.e.,
$C^{r}_{j}=0$. Next,
$\{E_j=e_j/\sqrt{6}\}$ form an orthonormal frame for $(M^3, \mathrm{g})$, with the dual 1-form $\{\o_j\}$. So the frame used in Section~3 is given by
\[\eta_j=Y_j=(0,e_j), ~~\hat{Y}=N=\frac{1}{2\sqrt{6}}(-1,x), ~~\xi_r=(0,n_r).\]
The structure equations are
\begin{equation}
d\begin{pmatrix}
Y\\ \hat{Y} \\ \eta_1 \\ \eta_2 \\ \eta_3 \\ \xi_1 \\ \xi_2
\end{pmatrix}=\begin{pmatrix}
0&0&\o_1&\o_2&\o_3&0&0\\
0&0&\frac{\o_1}{12}&\frac{\o_2}{12}&\frac{\o_3}{12}&0&0\\
\frac{-\o_1}{12}&-\o_1&0&0&\frac{\o_2}{\sqrt{6}}&
\frac{\o_2}{\sqrt{6}}&\frac{\o_1}{\sqrt{6}}\\
\frac{-\o_2}{12}&-\o_2&0&0&\frac{-\o_1}{\sqrt{6}}&
\frac{\o_1}{\sqrt{6}}&\frac{-\o_2}{\sqrt{6}}\\
\frac{-\o_3}{12}&-\o_3&\frac{-\o_2}{\sqrt{6}}&
\frac{\o_1}{\sqrt{6}}&0&0&0\\
0&0&\frac{-\o_2}{\sqrt{6}}&\frac{-\o_1}{\sqrt{6}}&0&0&
\frac{\o_3}{\sqrt{6}}\\
0&0&\frac{-\o_1}{\sqrt{6}}&\frac{\o_2}{\sqrt{6}}&0&
\frac{-\o_3}{\sqrt{6}}&0
\end{pmatrix}\begin{pmatrix}
Y\\ \hat{Y} \\ \eta_1 \\ \eta_2 \\ \eta_3 \\ \xi_1 \\ \xi_2
\end{pmatrix}.\label{4.15}
\end{equation}

In \cite{Smet} they gave a characterization of this example as
the unique Wintgen ideal submanifold $M^m\to\mathbb{Q}^{m+2}(c)$
with constant non-zero normal curvature.
Here we provide another characterization of it in M\"obius geometry.

In the statement below, a connected submanifold $M$ in $\mathbb{S}^5$ is said to be \emph{locally M\"obius homogenous} if
for any two points $p,q\in M$, there are two neighborhoods
$U_p,U_q\subset M$ of them respectively and a M\"obius transformation $T$ such that $T(p)=q, T(U_p)=U_q$.
An essential property of a locally (M\"obius) homogenous submanifold is that any well-defined (M\"obius) invariant function on it must be a constant.

\begin{theorem}\label{thm-homog}
Let $x: M^3\rightarrow \mathbb{S}^5$ be a Wintgen ideal submanifold of dimension 3. If it is locally M\"obius homogenous and the distribution $\mathbb{D}=\mathrm{Span}\{E_1, E_2\}$ is not integrable, then up to a M\"obius transformation this is part of $x: \mathrm{SO}(3) \rightarrow \mathbb{S}^5$ given in Example~\ref{ex2}.
\end{theorem}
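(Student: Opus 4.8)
The plan is to use local Möbius homogeneity to force all well-defined Möbius invariants of $x$ to be constants, feed these into the integrability equations of Section~4, and show that the structure equations \eqref{3.7}--\eqref{3.10} are then forced to coincide with those of Example~\ref{ex2}, equation~\eqref{4.15}; the congruence theorem of Möbius submanifold geometry (Section~2) then finishes the proof. As a first step, working locally I normalize the frame by \eqref{transform} so that $V\equiv 0$; by Remark~\ref{rem-invariant} the functions $U,L,G,\hat F$ are then well-defined Möbius invariants, hence constants, with $L\ne 0$ by hypothesis. Equation~\eqref{compare4} gives $G=E_3(L)=0$, so the Assumption $\lambda=G/L=0$ is in force and \eqref{3.27}, \eqref{3.28} hold. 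Using \eqref{o}, \eqref{Omega12+}, \eqref{Omega13+} and \eqref{compare1}, \eqref{compare2}, \eqref{compare5} the remaining structure 1-forms become $\omega=U\omega_2$, $\hat\omega_1=\hat F\omega_1$, $\hat\omega_2=\hat F\omega_2$, $\hat\omega_3=UL\,\omega_1+\hat F\omega_3$, $\Omega_{13}=L\omega_2$, $\Omega_{23}=-L\omega_1$, $2\Omega_{12}+\theta_{12}=U\omega_1+L\omega_3$, with only $\Omega_{12}$ (equivalently $\theta_{12}$) still undetermined.

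The decisive step is a dichotomy on $U$. Suppose first $U\ne 0$. From \eqref{3.28} and the expression for $\hat\omega_3$ one computes $d\omega=UL\,\omega_1\wedge\omega_3$; comparing with $d\omega=U\,d\omega_2$ evaluated via \eqref{3.15} forces $\theta_{12}+\Omega_{12}=h\omega_1-L\omega_3$ for some function $h$, and hence (using $2\Omega_{12}+\theta_{12}=U\omega_1+L\omega_3$) expressions for $\Omega_{12}$ and $\theta_{12}$ that are linear in $\omega_1,\omega_3$ with coefficients in $\{U,L,h\}$. Plugging $\theta_{12}$ into \eqref{3.24}, and using \eqref{3.14} and \eqref{3.21} to evaluate $d\omega_1$ and $d\omega_3$, the vanishing of the $\omega_1\wedge\omega_3$- and $\omega_2\wedge\omega_3$-coefficients forces $E_3(h)=0$ and $h=-U$ (a constant), while the $\omega_1\wedge\omega_2$-coefficient then reads $3U^2+6L^2+2\mu^2=0$, which is impossible since $2\mu^2=\tfrac13>0$. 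Hence $U\equiv 0$, so $\omega=0$. In this case $2\Omega_{12}+\theta_{12}=L\omega_3$ with $L$ constant, and combining \eqref{3.18}, \eqref{3.24} (with weights $2$ and $1$) with $d\omega_3=2L\omega_1\wedge\omega_2$ and $\hat F=L^2/2$ (from \eqref{compare3}, \eqref{compare5}) gives $L^2=\mu^2$. Flipping $E_3$ if necessary (Remark~\ref{rem-transform}) we take $L=\mu=1/\sqrt6$, so $\hat F=\tfrac1{12}$; and since $U=V=0$ the Möbius form vanishes, $C^r_j=0$.

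To conclude: with $U=V=0$ we have $\Omega_{12}=\omega_{12}$, and \eqref{3.18} now gives $d\omega_{12}=(2\mu^2-2L^2)\omega_1\wedge\omega_2=0$; since a rotation \eqref{transform} changes $\omega_{12}$ by an exact form while preserving $V=0$ and the form \eqref{3.1} of $\mathbf B$, one normalizes the frame once more so that $\omega_{12}\equiv 0$, whence $\Omega_{12}=0$ and $\theta_{12}=L\omega_3=\mu\omega_3$. At this point every 1-form appearing in \eqref{3.7}--\eqref{3.10} is a constant-coefficient combination of $\omega_1,\omega_2,\omega_3$ (namely $\omega=0$, $\hat\omega_i=\tfrac1{12}\omega_i$, $\Omega_{12}=0$, $\Omega_{13}=\mu\omega_2$, $\Omega_{23}=-\mu\omega_1$, $\theta_{12}=\mu\omega_3$, and $\mathbf B$ of the form \eqref{3.1} with $\mu=1/\sqrt6$), and a term-by-term comparison identifies these structure equations exactly with \eqref{4.15}. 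In particular the basic Möbius data $\{\mathrm g,\mathbf B,\{\theta_{rs}\}\}$ of $x$ coincides with that of Example~\ref{ex2}; by the congruence theorem for submanifolds in Möbius geometry, $x$ is, near every point, Möbius equivalent to an open part of $x:\mathrm{SO}(3)\to\mathbb{S}^5$.

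The computational heart, and the main obstacle, is the second paragraph: reducing \eqref{3.18}, \eqref{3.24}, \eqref{3.26} to polynomial relations in $U,L,h,\mu$ and extracting the positivity obstruction $3U^2+6L^2+2\mu^2=0$ that eliminates the case $U\ne 0$. (An alternative for the last two paragraphs: once $U=0$, hence $G=0$ and $d\omega=0$, one may invoke Theorem~\ref{thm-minimal} and Theorem~\ref{thm-Hopf} to realize $x$ as the Hopf lift of a holomorphic curve in $\mathbb{C}P^2$, then use the classification of constant-curvature holomorphic curves there, the totally geodesic case being ruled out because its lift is a totally umbilic $\mathbb{S}^3$; but establishing $\omega=0$ in the first place still requires the same dichotomy.)
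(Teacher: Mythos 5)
Your proposal is correct and follows essentially the same route as the paper: local M\"obius homogeneity forces $\hat F, G, L$ (and, after normalizing $V=0$, also $U$) to be constants, the integrability conditions for $\Omega_{12}$ and $\theta_{12}$ produce a sum-of-squares contradiction unless $\omega=0$, and then $L=\mu=1/\sqrt{6}$, $\Omega_{12}$ is closed and can be normalized to zero, and the structure equations are matched with \eqref{4.15}. The only difference is in the middle computation: you pin down $\theta_{12}+\Omega_{12}$ from the two expressions for $d\omega$ and arrive at $3U^2+6L^2+2\mu^2=0$, whereas the paper writes $\Omega_{12}=\alpha\omega_1+\beta\omega_2+\gamma\omega_3$ with constant coefficients and solves the system \eqref{5.10}--\eqref{5.15} to reach the equivalent relation $2\mu^2=-3U^2-3V^2-6L^2$.
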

\begin{proof}
According to Remark~\ref{rem-transform} and Remark~\ref{rem-invariant}, the coefficients $\{\hat{F}, G, L\}$ appearing in the structure equations are geometric invariants.
Thus under our assumption these functions must be constants;
in particular $L$ is a non-zero constant.

By \eqref{compare4} we know $G=E_3(L)=0$. From \eqref{compare1}$\sim$\eqref{compare5}, we have
\begin{equation}
\qquad 2\hat{F}=L^2,~~~
\hat\o_3=LU\o_1+LV\o_2+\hat{F}\o_3.\label{5.7}
\end{equation}
By \eqref{Omega12+} one can write out explicitly that
\begin{align}
\Omega_{12}&=\alpha\o_1+\beta\o_2+\gamma\o_3.\label{5.8}\\
\theta_{12}&=(U-2\alpha)\o_1+(V-2\beta)\o_2+(L-2\gamma)\o_3.
\label{5.9}
\end{align}
Note that in general $\alpha, \beta, \gamma$ are not geometric invariants, because they are components of the connection 1-form $\Omega_{12}$, and when the frame $\{E_1,E_2\}$ rotate by angle $t$ in \eqref{transform}, $\Omega_{12}$ will differ by a closed
1-form $dt$.

On the other hand, by \eqref{UVL}, $U^2+V^2$ is the square of the norm of the M\"obius form $\Phi$ (up to a non-zero constant), hence a geometric invariant. It must also be a constant on $M^3$.\\

\textbf{Claim}: The 1-form $\o=0$ identically; i.e., $U=V=0$ on $M^3$ everywhere and under any frame $\{E_1,E_2\}$. (As a consequence of this fact and the conclusion of Theorem~\ref{thm-Hopf}, any of such examples is the Hopf lift of a complex curve in $\mathbb{C}P^2$.)\\

We prove this claim by contradiction. Suppose $U^2+V^2$ is a
non-zero constant. We can choose a canonical frame according to Remark~\ref{rem-invariant}.
With respect to such a canonical frame,
all coefficients $\alpha,\beta,\gamma$ in \eqref{5.8} are now well-defined functions, hence be constants.
From \eqref{3.14}$\sim$\eqref{3.21} we have
\begin{align}
d\Omega_{12}=&(\alpha^2-\alpha U+\beta^2-\beta V+2L\gamma)\o_1\wedge\o_2  \notag \\
&+[\alpha(L-\gamma)+\gamma U]\o_2\wedge\o_3
-[\beta(L-\gamma)+\gamma V]\o_1\wedge\o_3,\label{dOmega12}\\
d\theta_{12}=&-[(U-2\alpha)(U-\alpha)
+(V-2\beta)(V-\beta)-2(L-2\gamma)L]\o_1\wedge\o_2  \notag\\
&+[(U-2\alpha)(L-\gamma)+U(L-2\gamma)]\o_2\wedge\o_3 \notag\\
&-[(V-2\beta)(L-\gamma)+V(L-2\gamma)]\o_2\wedge\o_3. \label{dtheta12}
\end{align}
Comparing the coefficients with \eqref{3.18} and \eqref{3.24}
separately, we obtain
\begin{gather}
\alpha^2-\alpha U+\beta^2-\beta V+2L\gamma=2\mu^2-2L^2, \label{5.10}\\
\alpha(L-\gamma)+\gamma U=0,\label{5.11}\\
\beta(L-\gamma)+\gamma V=0, \label{5.12}\\
2\mu^2=-(U-2\alpha)(U-\alpha)-(V-2\beta)(V-\beta)
+2(L-2\gamma)L,\label{5.13}\\
(U-2\alpha)(L-\gamma)+U(L-2\gamma)=0,\label{5.14}\\
(V-2\beta)(L-\gamma)+V(L-2\gamma)=0. \label{5.15}
\end{gather}
If $\gamma=L\ne 0$, then from \eqref{5.11} and \eqref{5.12} we have $U=V=0$. This does not only contradict with the assumption
$U^2+V^2\ne 0$, but also implies from \eqref{5.13} that $\mu^2=-\alpha^2-\beta^2-L^2$, a contradiction with $L\ne 0,\mu=1/\sqrt{6}\ne 0$.

If $\gamma\ne L$, then \eqref{5.11} and \eqref{5.12} tell us
$\alpha=-\frac{\gamma }{L-\gamma}U,\beta=-\frac{\gamma }{L-\gamma}V.$
Combined with \eqref{5.14} and \eqref{5.15}, we get
$\gamma=2L, \alpha=2U, \beta=2V.$ Insert them into \eqref{5.13}, we have $2\mu^2=-3U^2-3V^2-6L^2.$
So there must be $U=V=L=0$, which also contradicts to our assumption. Thus the claim is proved.\\

Now that $U=V=\frac{G}{L}=0$, we have $\o=0$ and
$2\Omega_{12}+\theta_{12}=L\o_3$. Differentiate the last equation. We get
\[
d(2\Omega_{12}+\theta_{12})=Ld\o_3=2L^2\o_1\wedge\o_2.
\]
On the other hand, still by \eqref{3.18}\eqref{3.24} there is
\[
d(2\Omega_{12}+\theta_{12})=(6\mu^2-4L^2)\o_1\wedge\o_2.
\]
Comparison shows
\[L=\mu=1/\sqrt{6}
\]
(assume $L>0$ without loss of generality). Then by \eqref{Omega12}\eqref{3.18} and $2\hat{F}=L^2$ in \eqref{5.7}, we obtain
\[
d\Omega_{12}=d\o_{12}=2(\mu^2-L^2)\o_1\wedge\o_2=0.
\]
Thus $\Omega_{12}$ is a closed 1-form, which is locally an exact 1-form. Then we can use
\eqref{transform} to find another frame such that $\Omega_{12}=\o_{12}=0$ and such frame is canonically chosen once
it is fixed at an arbitrary point. With respect to this frame on a simply connected domain of $M^3$ we know $\alpha=\beta=\gamma=0$ in \eqref{5.8},
and $\theta_{12}=\frac{1}{\sqrt{6}}\o_3$ in \eqref{5.9}. The proof is finished by checking that the structure equations are the same as \eqref{4.15} for $x(\mathrm{SO}(3))$.
\end{proof}

\begin{remark}
For M\"obius homogeneous Wintgen ideal submanifolds $M^3\to \mathbb{S}^5$ with $L=0$ (integrable $\mathbb{D}$), the classification will not be difficult. By the conclusion of
Theorem~A in the introduction, such an example comes from super-minimal surface $\overline{M}$ in four dimensional space forms. This super-conformal $\overline{M}$ must also be homogeneous by itself. According to our classification of
Willmore surfaces with constant M\"obius curvature \cite{MaWang}, this $\overline{M}$ should be the Veronese surface $\mathbb{R}P^2\to \mathbb{S}^4$,
and the original $M^3$ is a cone in $\mathbb{R}^5$ over this surface.
\end{remark}

\vspace{5mm} \noindent Zhenxiao Xie,
{\small\it School of Mathematical Sciences, Peking University,
Beijing 100871, People's Republic of China.
e-mail: {\sf xiezhenxiao@126.com}}

\vspace{5mm} \noindent Tongzhu Li,
{\small\it Department of Mathematics, Beijing Institute of
Technology, Beijing 100081, People's Republic of China.
e-mail:{\sf litz@bit.edu.cn}}

\vspace{5mm} \noindent Xiang Ma,
{\small\it LMAM, School of Mathematical Sciences, Peking University,
Beijing 100871, People's Republic of China.
e-mail: {\sf maxiang@math.pku.edu.cn}}

\vspace{5mm} \noindent Changping Wang
{\small\it School of Mathematics and Computer Science,
Fujian Normal University, Fuzhou 350108, People's Republic of China.
e-mail: {\sf cpwang@fjnu.edu.cn}}

\begin{thebibliography}{99}

\bibitem{br0}
R. Bryant, {\em A duality theorem for Willmore surfaces,} J. Diff. Geom. 20(1984), 20-53.

\bibitem{br}
R. Bryant, {\em Some remarks on the geometry of austere manifolds,} Bol. Soc. Bras. Mat., 21(1991), 122-157.

\bibitem{fb}
F. Burstall, D. Ferus, K. Leschke, F. Pedit, U. Pinkall,
{\em Conformal geometry of surfaces in the 4-sphere and Quaternions,}
Lect. Notes Math., vol 1772, Springer-Verlag, 2002.

\bibitem{Dajczer1}
M. Dajczer, R. Tojeiro,
{\em All superconformal surfaces in $R^4$ in terms of minimal surfaces,}
Math. Z. 261 (2009), no. 4, 869-890.

\bibitem{Dajczer2}
M. Dajczer, R. Tojeiro,
{\em Submanifolds of codimension two attaining equality in an extrinsic inequality,}
Math. Proc. Cambridge Philos. Soc. 146 (2009), no. 2, 461-474.

\bibitem{Dajczer3}
M. Dajczer, R. Tojeiro,
{\em A class of austere submanifolds,} Illinois J. Math., 45(2001), no.3, 735-755.

\bibitem{Smet}
P. J. De Smet, F. Dillen, L. Verstraelen, L. Vrancken,
{\em A pointwise inequality in submanifold theory,}
Archivum Mathematicum (Brno), 35(1999), 115-128.

\bibitem{Ge}
J. Ge, Z. Tang,
{\em A proof of the DDVV conjecture and its equality case,}
Pacific J. Math. 237 (2008), 87-95.

\bibitem{gr}
I. Guadalupe, L. Rodr\'{\i}guez,
{\em Normal curvature of surfaces in space forms,} Pacific J. Math., 106(1983), 95-103.

\bibitem{Jeromin}
U. Hertrich-Jeromin,
{\em Introduction to M\"obius Differential Geometry,} Cambridge University Press, 2003.

\bibitem{Li0}
T. Li, X. Ma, C. Wang,
{\em Deformation of Hypersurfaces Preserving the
M\"obius Metric and a Reduction Theorem,} arXiv:1204.1408, accepted by Adv. in Math. 

\bibitem{Li1}
T. Li, X. Ma, C. Wang,
{\em Wintgen ideal submanifolds with a low-dimensional integrable distribution (I),} arxiv:1301.4742.

\bibitem{Li2}
T. Li, X. Ma, C. Wang, Z. Xie,
{\em Wintgen ideal submanifolds with a low-dimensional integrable distribution (II),} in preparation.

\bibitem{Li3}
T. Li, X. Ma, C. Wang, Z. Xie,
{\em Wintgen ideal submanifolds of codimension two, complex curves, and M\"obius geometry,} preprint.

\bibitem{liu}
H. Liu, C. Wang, G. Zhao,
{\em M\"{o}bius isotropic submanifolds in
$S^n$}, Tohoku Math. J., 53(2001), 553-569.

\bibitem{Lu} T. Choi and Z. Lu,
{\em On the DDVV conjecture and the comass in calibrated geometry (I),} Math. Z., 260(2008), 409-429.

\bibitem{Lu1} Z. Lu,
{\em On the DDVV conjecture and the comass in calibrated geometry (II),} arXiv:Math.DG/0708.2921.

\bibitem{Lu2}
Z. Lu,
{\em Recent developments of the DDVV conjecture, }
Bull. Transil. Univ. Brasov, ser.B, 14(2008), no. 49, 133-144.

\bibitem{Lu3}
Z. Lu,
{\em Normal scalar curvature conjecture and its applications,} Journal of Functional Analysis 261 (2011), 1284-1308.

\bibitem{MaOhnita}
H. Ma, Y. Ohnita,
{\em Hamiltonian stability of the Gauss images of homogeneous isoparametric hypersurfaces,} arxiv:1207.0338

\bibitem{MaWang}
X. Ma, C. Wang,
{\em Willmore Surfaces of Constant M¡§obius Curvature,}
Ann. Glob. Anal. Geom. 32 (2007), no.3, 297-310.

\bibitem{CPWang}
C. Wang,
{\em M\"{o}bius geometry of submanifolds in
$S^n$,} Manuscripta Math., 96(1998), 517-534.

\bibitem{wint}
P. Wintgen,
{\em Sur l'in\'{e}galit\'{e} de Chen-Willmore,} C. R. Acad. Sci. Paris, 288(1979),993-995.


\end{thebibliography}
\end{document}